\theoremstyle{plain}
\newtheorem*{theorem*}{Theorem}
\newtheorem{theorem}{Theorem}
\newtheorem*{proposition*}{Proposition}
\newtheorem*{corollary*}{Corollary}
\newtheorem{lemma}[theorem]{Lemma}
\newtheorem*{lemma*}{Lemma}
\theoremstyle{definition}
\newtheorem*{remark*}{Remark}
\newtheorem*{remarks*}{Remarks}
\newtheorem*{conjecture*}{Conjecture}
\theoremstyle{definition}
\newcommand{\D}{\mathbb{D}}
\newcommand{\C}{\mathbb{C}}
\newcommand{\B}{\mathbb{B}}
\newcommand{\N}{\mathbb{N}}
\newcommand{\E}{\mathbb{E}}
\newcommand{\Li}{\operatorname{Li}}
\newcommand{\Aut}{\operatorname{Aut}}
\newcommand{\Var}{\operatorname{Var}}
\renewcommand{\Im}{\operatorname{Im}}
\title[Volume fluctuations of random analytic varieties in the unit ball]
{Volume fluctuations of random analytic varieties in the unit ball}
\author[Xavier Massaneda] {Xavier Massaneda}
\address{Departament de Matem\`atica Aplicada i An\`alisi,
Universitat  de Bar\-ce\-lo\-na, Gran Via 585, 08071-Bar\-ce\-lo\-na, Spain}
\email{xavier.massaneda@ub.edu}
\author[Bharti Pridhnani] {Bharti Pridhnani}
\address{Departament de Matem\`atica Aplicada i An\`alisi,
Universitat  de Bar\-ce\-lo\-na, Gran Via 585, 08071-Bar\-ce\-lo\-na, Spain}
\email{bharti.pridhnani@ub.edu}
\thanks{Both authors supported by the Generalitat de Catalunya (grant 2009 SGR 01303) and the spanish Ministerio de Ciencia e Innovaci\'on 
(projects MTM2011-27932-C02-01).}
\date{\today}
\keywords{}
\subjclass{}
\begin{document}

\begin{abstract} Given a Gaussian analytic function $f_L$ of intesity $L$ in the unit ball of $\C^n$, $n\geq 2$, consider its (random) zero variety $Z(f_L)$.  We study the variance of the $(n-1)$-dimensional volume of $Z(f_L)$ inside a pseudo-hyperbolic ball of radius $r$. We first express this variance as an integral of a positive function in the unit disk. Then we study its asymptotic behaviour as $L\to\infty$ and as $r\to 1^{-}$. Both the results and the proofs generalise to the ball those given by Jeremiah Buckley for the unit disk.
\end{abstract}

\maketitle

\section{Definitions and statements}

Let $\mathbb B_n$ denote the unit ball in $\mathbb C^n$ and let $\nu$ denote the Lebesgue measure in $\mathbb C^n$ normalised so that $\nu(\B_n)=1$. Explicitly $\nu=\frac{n!}{\pi^n} dm=\beta^n$, where $dm$ is the Lebesgue measure and $\beta=\frac {i}{2\pi}\partial\bar\partial |z|^2$ is the fundamental form of the Euclidean metric.

For $L>n$ consider the weighted Bergman space
\[
 B_L(\mathbb B_n)=\bigl\{f\in H(\B_n) : \|f\|_{n,L}^2:=c_{n,L} \int_{\B_n} |f(z)|^2 (1-|z|^2)^{L}  d\mu(z)< +\infty\bigr\} ,
\]
where  
\begin{equation}\label{m-invariant}
 d\mu(z)=\frac {d\nu(z)}{(1-|z|^2)^{n+1}},
\end{equation}
and $c_{n,L}=\frac{\Gamma(L)}{n!\Gamma(L-n)}$ is chosen so that $\|1\|_{n,L}=1$. 

Let
\[
 e_{\alpha}(z)=\left(\frac{\Gamma(L+|\alpha|)}{\alpha!\Gamma(L)}\right)^{1/2} z^\alpha
\]
denote the normalisation of the monomial $z^\alpha$ in the norm $\|\cdot\|_{n,L}$, so that $\{e_\alpha\}_\alpha$ is an orthonormal basis of $B_L(\mathbb B_n)$. As usual, here we denote $z=(z_1,\dots,z_n)$ and use the multi-index notation $\alpha=(\alpha_1,\dots,\alpha_n)$, $\alpha !=\alpha_1!\cdots \alpha_n!$, $|\alpha|=|\alpha_1|+\cdots +|\alpha_n|$ and $z^\alpha=z_1^{\alpha_1}\cdots z_n^{\alpha_n}$.

The \emph{hyperbolic Gaussian analytic function} (GAF) of \emph{intensity} $L$ is defined as
\[
 f_L(z)=\sum_\alpha a_\alpha  \left(\frac{\Gamma(L+|\alpha|)}{\alpha!\Gamma(L)}\right)^{1/2} z^\alpha \qquad z\in\B_n,
\]
where $a_\alpha $ are i.i.d. complex Gaussians of mean 0 and variance 1 ($a_\alpha \sim N_{\C}(0,1)$).


The sum defining $f_L$ can be analytically continued to $L>0$, which we assume henceforth.

The characteristics of the hyperbolic GAF are determined by its covariance kernel, which is given by (see \cite{ST1}*{Section 1}, \cite{Stoll}*{p.17-18})
\begin{align*}
 K_L(z,w)&=\E[f_L(z)\overline{f_L(w)}]= \sum_\alpha \frac{\Gamma(L+|\alpha|)}{\alpha!\Gamma(L)} z^\alpha \bar w^\alpha=
 \sum_{m=0}^\infty   \frac{\Gamma(L+m)}{\Gamma(L)} \sum_{\alpha : |\alpha|=m} \frac 1{\alpha!} z^\alpha \bar w^\alpha\\
 &=
 \sum_{m=0}^\infty \frac{\Gamma(L+m)}{m! \Gamma(L)} (z\cdot\bar w)^m=
 \frac 1{(1-z\cdot \bar w)^{L}}\ .
\end{align*}

A main feature of the hyperbolic GAF is that the distribution of the (random) integration current of its zero variety $Z_{f_L}$, 
\[
 [Z_{f_L}]=\frac {i}{2\pi}\partial\bar\partial\log |f_L|^2\ ,
\]
is invariant under automorphisms of the unit ball (see \cite{ST1}*{Section 1} or \cite{BMP}). Given $w\in\B_n$ there exists $\phi_w\in\Aut(\B_n)$ such that $\phi_w(w)=0$ and $\phi_w(0)=w$, and all automorphisms are essentially of this form: for all $\psi\in\Aut(B_n)$ there exist $w\in\B_n$ and $\mathcal U$ in the unitary group such that $\psi=\mathcal U \phi_w$ (see \cite{Ru}*{2.2.5}). Then the \emph{pseudo-hyperbolic distance} $\varrho$ in $\B_n$ is defined as
\[
 \varrho(z,w)=|\phi_w(z)|,\quad z,w\in\B_n\ ,
\]
and the corresponding pseudo-hyperbolic balls as 
\[
 E(w,r)=\{z\in\B_n : \varrho(z,w)<r\}, \qquad r<1\ .
\]

The Edelman-Kostlan formula (see \cite{HKPV}*{Section 2.4} and \cite{Sod}*{Theorem 1}) gives the so-called \emph{first intensity} of the GAF:
\[
\E[Z_{f_L}] =\frac{i}{2\pi}\partial \overline{\partial}\log K_L(z,z)=L\, \omega(z)\ ,
\]
where $\omega$ is the invariant form
\[
\omega(z)=\frac{i}{2\pi}\partial \overline{\partial}\log\bigl(\frac 1{1-|z|^2}\bigr)=\frac{1}{(1-|z|^2)^2}\sum^{n}_{j,k=1}[(1-|z|^2)\delta_{j,k}+z_k\overline{z_j}] \frac{i}{2\pi}dz_j\wedge d\overline{z_k}\ .
\]

In this paper we study the fluctuations of the $(n-1)$-dimensional volume of the random variety $Z_{f_L}$ inside a pseudo-hyperbolic ball 
$ E(z,r)$. By the invariance under automorphisms, this is equivalent to measuring the $(n-1)$-th volume of $Z_{f_L}$ inside $B(0,r)$. This volume is given by the integral
\[
 I_{f_L}(r)=\int_{B(0,r)\cap Z_{f_L}} \omega_{n-1}=\int_{B(0,r)} \frac {i}{2\pi}\partial\bar\partial\log |f_L|^2 \wedge \omega_{n-1}\ ,
\]
where for $p=1,\dots,n$, $\omega_p=\omega^p/p!$.

Notice that now $\omega^n=d\mu$, so the Edelman-Kostlan formula yields trivially
\[
 \mathbb E[I_{f_L}(r)]=L\int_{B(0,r)}\frac{\omega^n}{(n-1)!}=\frac L{(n-1)!}\mu(B(0,r))=\frac L{(n-1)!}\frac{r^{2n}}{(1-r^2)^n}\ .
\]

Our main goal is to study the variance
\[
 \Var I_{f_L}(r)= \E[(I_{f_L}(r)-\E(I_{f_L}(r)))^2]\ ,
\]
and, particularly, to describe its asymptotic behaviour as $L\to\infty$ and as $r\to 1^{-}$.

The computations are much simpler if we consider the Euclidean volume instead of the invariant volume defined above. Let
\[
 E_{f_L}(r)=\int_{B(0,r)\cap Z_{f_L}} \beta_{n-1}=\int_{B(0,r)} \frac {i}{2\pi}\partial\bar\partial\log |f_L|^2 \wedge \beta_{n-1}\ ,
\]
where, for $p=1,\dots,n$, $\beta_p=\beta^p/p!$.

The key result is the following reduction of $\Var E_{f_L}(r)$ to an integral of a positive function in the unit disk, together with the relation between $\Var I_{f_L}(r)$ and $\Var E_{f_L}(r)$.

\begin{theorem}\label{integral} Let $n\geq 2$. For $L>0$ and $r\in(0,1)$,

\begin{itemize}
 \item [(a)] $\displaystyle{\Var E_{f_L}(r)=\frac{r^{4n}  L^2 (1-r^2)^{2L-2}}{ (n-1)!(n-2)!} \int_{\D}\frac{(1-|w|^2)^{n-2}}{|1-r^2w|^{2L}-(1-r^2)^{2L}}\frac{|1-w|^2}{|1-r^2w|^2}\; \frac{dm(w)}{\pi}}$; 
 
 \item [(b)]  $\displaystyle{ \Var I_{f_L}(r)=\dfrac{\Var E_{f_L}(r)}{(1-r^2)^{2n-2}}}$.

\end{itemize}

\end{theorem}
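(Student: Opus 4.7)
For part (b), the plan is to deduce a deterministic almost-sure identity $I_{f_L}(r) = E_{f_L}(r)/(1-r^2)^{n-1}$ by Stokes' theorem applied to common primitives of $\omega_{n-1}$ and $\beta_{n-1}$. Writing $\omega = \frac{\beta}{1-|z|^2} + \frac{\gamma}{(1-|z|^2)^2}$ with $\gamma := \frac{i}{2\pi}\partial|z|^2\wedge\bar\partial|z|^2$, the identity $\gamma\wedge\gamma=0$ (because $\gamma=\alpha\wedge\bar\alpha$ with $\alpha=\partial|z|^2$ of rank one, and $\alpha\wedge\alpha=0$) truncates the binomial expansion of $\omega^{n-1}$ to
\[
\omega_{n-1} = \frac{\beta_{n-1}}{(1-|z|^2)^{n-1}} + \frac{\beta_{n-2}\wedge\gamma}{(1-|z|^2)^n}.
\]
A short integration by parts, using $\gamma = \frac{1}{2\pi}d|z|^2\wedge d^c|z|^2$, $dd^c|z|^2=2\pi\beta$, and $d|z|^2/(1-|z|^2)^n = \frac{1}{n-1}\,d[(1-|z|^2)^{-(n-1)}]$, converts the above into exact forms
\[
\omega_{n-1} = \frac{1}{2\pi(n-1)}d\left(\frac{\beta_{n-2}\wedge d^c|z|^2}{(1-|z|^2)^{n-1}}\right),\qquad \beta_{n-1} = \frac{1}{2\pi(n-1)}d\bigl(\beta_{n-2}\wedge d^c|z|^2\bigr).
\]
Since $[Z_{f_L}]$ is $d$-closed as a current (Poincar\'e--Lelong), Stokes' theorem gives $I_{f_L}(r) = E_{f_L}(r)/(1-r^2)^{n-1}$ a.s., since the primitive of $\omega_{n-1}$ differs from that of $\beta_{n-1}$ by the factor $(1-|z|^2)^{n-1}$, which becomes the constant $(1-r^2)^{n-1}$ on $\partial B(0,r)$. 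Squaring and taking expectations yields (b).

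For part (a), I would start from the Sodin--Tsirelson identity $\operatorname{Cov}(\log|f_L(z)|^2,\log|f_L(w)|^2) = G(|\rho|^2)$ with $G(t) = \sum_{k\geq 1}t^k/k^2$ and $|\rho(z,w)|^2 = (1-|z|^2)^L(1-|w|^2)^L|1-z\cdot\bar w|^{-2L}$. Differentiation commutes with covariance, and the trace identity $(\tfrac{i}{2\pi}\partial\bar\partial\phi)\wedge\beta_{n-1} = \Delta^{\C}\phi\cdot\beta_n$ (where $\Delta^{\C}=\sum_j\partial_j\bar\partial_j$) yields
\[
\Var E_{f_L}(r) = \int_{B(0,r)^2}\Delta^{\C}_z\Delta^{\C}_wG(|\rho|^2)\,d\beta_n(z)\,d\beta_n(w).
\]
Green's identity in each variable then converts this Laplacian--Laplacian integral into a double boundary integral of $\partial_{r_z}\partial_{r_w}G(|\rho|^2)$ evaluated on $\partial B(0,r)^2$. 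The integrand depends on $z,w$ only through $z\cdot\bar w$, so unitary invariance combined with the Forelli--Rudin sphere-integration formula
\[
\int_{S^{2n-1}\times S^{2n-1}}F(\zeta\cdot\bar\eta)\,d\sigma_1(\zeta)\,d\sigma_1(\eta) = \frac{n-1}{\pi}\int_{\D}F(w)(1-|w|^2)^{n-2}\,dm(w)
\]
reduces the double sphere integral to a single integral on $\D$ via the change $u=r^2w$. Each of the two radial derivatives of $|\rho|^{2k}$ brings down a factor $kL$, so the overall prefactor $(kL)^2$ cancels the $1/k^2$ weight in $G$; summing the resulting geometric series gives $\sum_{k\geq 1}|\rho|^{2k} = (1-r^2)^{2L}/(|1-r^2w|^{2L}-(1-r^2)^{2L})$, producing the denominator of the theorem. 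Tracking the sphere-area factors $2\pi^n r^{2n-1}/(n-1)!$ and the remaining $\pi$- and $n!$-constants assembles the prefactor $r^{4n}L^2(1-r^2)^{2L-2}/((n-1)!(n-2)!)$.

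The main technical obstacle lies in this last step. The derivative $\partial_{r_z}\partial_{r_w}|\rho|^{2k}$ produces, besides the principal $(kL)^2$-term that feeds the geometric series and generates the compact denominator, a subleading cross-term (coming from $\partial_{r_w}\partial_{r_z}\log|\rho|^{2k}$) proportional to $kL\Re[u/(1-u)^2]/r^2$, whose naive summation over $k$ produces a dilogarithmic contribution. Matching the stated closed form---in particular producing the clean angular factor $|1-w|^2/|1-r^2w|^2$---requires showing that after integration against the Forelli--Rudin weight $(1-|w|^2)^{n-2}$ on $\D$, this cross-term combines with (or is absorbed by, via integration by parts on the disk) the principal term to leave the compact integrand of the theorem. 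This is precisely the point where Buckley's one-dimensional ($n=1$) computation requires genuine additional sphere-integration work to extend to the ball.
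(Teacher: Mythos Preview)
Your treatment of (b) is correct and in fact cleaner than the paper's. You prove the pointwise (sample-wise) identity $I_{f_L}(r)=(1-r^2)^{-(n-1)}E_{f_L}(r)$ via explicit primitives of $\omega_{n-1}$ and $\beta_{n-1}$; the paper instead re-runs the entire boundary computation of (a) with $\omega$ in place of $\beta$ and verifies, in the sphere parametrisation, that $\frac{i}{2\pi}d\bar z_j\wedge\omega_{n-1}(r\alpha)=(1-r^2)^{-(n-1)}\,\frac{i}{2\pi}d\bar z_j\wedge\beta_{n-1}(r\alpha)$. Your route is shorter and yields the stronger deterministic statement.

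For (a), the gap you flag is real and your approach, as written, will not close. The obstruction is not an accident of bookkeeping: the \emph{real} radial derivative $\partial_{r_z}\partial_{r_w}\log\theta$ does not vanish, so the dilogarithmic cross-term genuinely survives and no disk integration by parts will cancel it against the principal term to produce the clean factor $|1-w|^2/|1-r^2w|^2$. The paper sidesteps this entirely by working with the \emph{complex} antiholomorphic derivatives $\partial_{\bar z_j}\partial_{\bar w_k}$ arising from Stokes applied to $\bar\partial\log|\hat f_L|^2\wedge\beta_{n-1}$ (the forms $\gamma_j=\frac{i}{2\pi}d\bar z_j\wedge\beta_{n-1}$). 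For these one has the exact identity
\[
\frac{\partial^2\theta}{\partial\bar z_j\,\partial\bar w_k}=\frac{1}{\theta}\,\frac{\partial\theta}{\partial\bar z_j}\,\frac{\partial\theta}{\partial\bar w_k},
\qquad\text{equivalently}\quad \partial_{\bar z_j}\partial_{\bar w_k}\log\theta=0,
\]
which kills the logarithmic term in $\partial_{\bar z_j}\partial_{\bar w_k}\rho_L$ and leaves only
\[
\frac{\partial^2\rho_L}{\partial\bar z_j\,\partial\bar w_k}=\frac{L^2}{\theta^2}\,\frac{\theta^L}{1-\theta^L}\,\frac{\partial\theta}{\partial\bar z_j}\,\frac{\partial\theta}{\partial\bar w_k}.
\]
This is exactly the collapse of $\sum_k k^{-2}\cdot(kL)^2\theta^{kL}$ to the geometric series you wanted, but obtained \emph{before} integrating, so no residual cross-term appears. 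After this, a unitary rotation sending $\eta\mapsto e_1$ and a slice parametrisation of the sphere reduce the double sphere integral to an integral over $\D$; the four resulting pieces recombine via $\bigl|1-\frac{(1-r^2)w}{1-r^2w}\bigr|^2=\frac{|1-w|^2}{|1-r^2w|^2}$ to give the stated integrand. The moral: replace your Green/radial-derivative reduction by the $\bar\partial$-Stokes reduction, and the problematic term never arises.
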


From this we obtain the leading term in the asymptotics of $\Var I_{f_L}(r)$ as $L\to\infty$.

\begin{theorem}\label{th-L} Let $n\geq 2$ and fix $r\in (0,1)$. Then, as $L\to\infty$,
 \[ 
  \Var I_{f_L}(r)=\frac{1}{4\sqrt{\pi}}\frac{\zeta(n+1/2)}{(n-1)!} \frac{r^{2n-1}}{(1-r^2)^{n}}\frac 1{L^{n-3/2}}[1+o(1)]\ .
 \]
\end{theorem}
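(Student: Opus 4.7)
\emph{Strategy.} Combining parts (a) and (b) of Theorem~\ref{integral}, I rewrite
\[
\Var I_{f_L}(r) = \frac{r^{4n}L^2}{(n-1)!(n-2)!(1-r^2)^{2n}}\, J_L, \qquad J_L := \int_\D \frac{(1-|w|^2)^{n-2}}{|1+\epsilon|^{2L}-1}\cdot \frac{|1-w|^2}{|1-r^2w|^2}\,\frac{dm(w)}{\pi},
\]
where $\epsilon(w) := r^2(1-w)/(1-r^2)$, so that $1-r^2w = (1-r^2)(1+\epsilon)$. The factor $|1+\epsilon|^{2L}$ stays $O(1)$ only when $\epsilon$ is small. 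A Taylor expansion
\[
\log|1+\epsilon|^2 = 2\Re\epsilon + (\Im\epsilon)^2 - (\Re\epsilon)^2 + O(|\epsilon|^3)
\]
shows that, after multiplication by $L$, the relevant scales are \emph{anisotropic}: $\Re\epsilon = O(1/L)$ and $\Im\epsilon = O(1/\sqrt L)$. This is the key observation driving the whole proof.

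\emph{Scaling and limit.} Set $\delta := (1-r^2)/r^2$ (so $\delta + 1 = 1/r^2$) and substitute $\epsilon = p/L + it/\sqrt L$, equivalently $w = 1 - \delta(p/L + it/\sqrt L)$, with $p>0$, $t \in \R$. A direct computation gives the pointwise limits
\[
|1+\epsilon|^{2L} \to e^{2p+t^2}, \quad L(1-|w|^2) \to 2\delta p - \delta^2 t^2, \quad L|1-w|^2 \to \delta^2 t^2, \quad |1-r^2w|^2 \to (1-r^2)^2,
\]
together with $dm(w) = \delta^2 L^{-3/2}\,dp\,dt$ and the constraint $|w|<1$ becoming $2p > \delta t^2$ in the limit. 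Collecting the factors $L^{-(n-2)}$, $L^{-1}$ and $L^{-3/2}$ coming from $(1-|w|^2)^{n-2}$, $|1-w|^2$ and $dm(w)$ yields $J_L = L^{-(n+1/2)}(A + o(1))$, and the prefactor $L^2$ produces the announced order $L^{-(n-3/2)}$. The limit integral $A$, after the substitutions $a := 2p - \delta t^2$ and $s := t/r$ (which give $2p + t^2 = a + t^2/r^2 = a + s^2$), reduces to a constant multiple of
\[
\int_0^\infty a^{n-2}\int_\R \frac{s^2\,ds}{e^{a+s^2}-1}\,da = \frac{\sqrt\pi}{2}\sum_{k=1}^\infty\frac{1}{k^{3/2}}\cdot\frac{(n-2)!}{k^{n-1}} = \frac{(n-2)!\sqrt\pi}{2}\,\zeta(n+1/2),
\]
where I used $\frac{1}{e^x-1} = \sum_{k\geq 1}e^{-kx}$, $\int_\R s^2 e^{-ks^2}\,ds = \frac{\sqrt\pi}{2k^{3/2}}$, and $\int_0^\infty a^{n-2}e^{-ka}\,da = (n-2)!/k^{n-1}$. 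Tracking the remaining constants (with $\delta^{n+2} = (1-r^2)^{n+2}/r^{2n+4}$) reproduces exactly the coefficient $\frac{1}{4\sqrt\pi}\frac{\zeta(n+1/2)}{(n-1)!}\frac{r^{2n-1}}{(1-r^2)^n}$.

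\emph{Main obstacle.} The delicate step is justifying the passage to the limit. I split $\D$ into $\{|w-1| \geq \eta\}$ and $\{|w-1| < \eta\}$ for a small $\eta > 0$. On the former, $|1-r^2w| \geq (1-r^2)(1+c_\eta)$ for some $c_\eta > 0$, so $|1-r^2w|^{2L} - (1-r^2)^{2L} \geq (1-r^2)^{2L}(1+c_\eta)^{2L}/2$ for $L$ large, and that portion of the integral is $O((1+c_\eta)^{-2L})$, negligible compared with $L^{-(n-3/2)}$. On the latter region the scaled change of variables applies, and the main point is to dominate the scaled integrand uniformly in $L$. Here I combine two lower bounds for $|1+\epsilon|^{2L}-1$: the in-disk constraint $2\Re\epsilon > \delta|\epsilon|^2$ gives $|1+\epsilon|^2 \geq 1 + |\epsilon|^2/r^2$, whence by Bernoulli $|1+\epsilon|^{2L}-1 \geq L|\epsilon|^2/r^2 \geq t^2/r^2$, controlling the contribution from large $|t|$; for large $p$ (in the range $2p+t^2 \leq L$), the elementary inequality $\log(1+y) \geq y/2$ for $y \in [0,1]$ yields $|1+\epsilon|^{2L} \geq e^{p+t^2/2}$, supplying the exponential decay needed for large $p$. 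Together these bounds produce an $L$-independent integrable envelope for the scaled integrand, and dominated convergence delivers the limit $A$.
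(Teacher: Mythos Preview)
Your argument is correct and takes a genuinely different route from the paper. The paper first rewrites the disk integral of Theorem~\ref{integral}(a) in ``polar'' coordinates $s=(1-r^2)/|1-r^2w|$, $\theta=\arg(1/r^2-w)$ (Lemma~\ref{heart}), then localises to $s$ close to $1$, Taylor-expands the inner $\theta$-integral around $s=1$, and evaluates the resulting Beta integrals and $\Gamma$-asymptotics term by term. You instead stay in the $w$-variable, identify the correct anisotropic blow-up $\Re\epsilon\sim 1/L$, $\Im\epsilon\sim 1/\sqrt L$ near $w=1$, and pass to the limit by dominated convergence; the limiting integral factorises and the series $\sum e^{-kx}$ produces $\zeta(n+1/2)$ immediately. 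Your approach is more in the spirit of a Laplace/stationary-phase argument and bypasses the auxiliary lemma entirely; the paper's coordinates, on the other hand, are tailored so that the same change of variables also serves the $r\to 1^-$ analysis of Theorem~\ref{th-r}.

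One small point of presentation: your description of how the two lower bounds on $|1+\epsilon|^{2L}-1$ are used is slightly misleading. The Bernoulli bound $|1+\epsilon|^{2L}-1\ge L|\epsilon|^2/r^2$ does not by itself control large $|t|$ (it only gives $F_L\lesssim p^{n-2}$, with no decay in $t$); its real role is to handle the region where $p+t^2$ is bounded, killing the potential singularity of $1/(|1+\epsilon|^{2L}-1)$ near $p=t=0$. It is the exponential bound $|1+\epsilon|^{2L}\ge e^{p+t^2/2}$ (valid on $|w-1|<\eta$ for $\eta$ small, since then $|1+\epsilon|^2-1\le 1$ automatically and the restriction $2p+t^2\le L$ is redundant) that supplies integrability both in $p$ and in $t$ at infinity, via $F_L\lesssim p^{n-2}(t^2+p)\,e^{-p-t^2/2}$ after noting $p^2/L\le (\eta/\delta)\,p$. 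With this clarification the dominated-convergence step is complete.
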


\begin{remarks*} 1. Even though the proof of Theorem~\ref{th-L} is only valid for $n\geq 2$, the statement matches with the corresponding result for  $n=1$ (see \cite[Theorem 2(a)]{Bu13} and the references therein). Notice also that $\Var E_{f_L}(r)=\Var I_{f_L}(r)$  when $n=1$.

2. As explained in \cite{SZ06}*{Section 2.2}, Theorem~\ref{th-L} can also be obtained with the methods used in the proof of the analogous result
in the context of compact manifolds. Let $p_N$ be a Gaussian holomorphic polynomial in $\C\mathbb P^n$ or, more generally, a section of a power $L^N$ of a positive Hermitian line bundle $L$ over an $n$-dimensional K\"ahler manifold $M$. Given a domain $\mathcal U\subset M$ with sufficiently regular boundary, define
\[
 A_{p_N}(\mathcal U)=\int_{Z_{p_N}\cap\, \mathcal U}\frac{\omega^{n-1}}{(n-1)!}\ ,
\]
where $\omega$ denotes the K\"ahler form of $M$.  According to B. Shiffman and S. Zelditch \cite[Theorem 1.4.]{SZ08} (with $k=1$) 
\[
 \Var A_{p_N}(\mathcal U)=\frac 1{N^{n-3/2}}\left[\frac{\pi^{n-5/2}}8 \zeta(n+1/2) \sigma(\partial\mathcal U)+O(N^{\epsilon-1/2})\right]\ ,
\]
where $\sigma(\partial\mathcal U)$ denotes the $(2n-1)$-volume of the boundary $\partial\mathcal U$. Notice that $\frac{r^{2n-1}}{(1-r^2)^{n}}$ is the $(2n-1)$-volume (with respect to the invariant form) of $\partial B(0,r)$.

The proof of this result is based on a (pluri)bipotential expression of $\Var I_{f_L}(r)$ (see the beginning os Section 2) and on good estimates of the covariance kernel, something we certainly have for the hyperbolic GAF in the ball.

3. Theorem~\ref{th-L} shows a strong form of ``self-averaging'' of the volume $I_{f_L}(r)$, in the sense that the fluctuations are of smaller order than the expected value (see remarks also in \cite{SZ08}). More precisely,as $L\to\infty$
\[
 \frac{\Var I_{f_L}(r)}{(\E[I_{f_L}(r)])^2}=\textrm O\left(\frac 1{L^{n+1/2}}\right)\ .
\]
Notice also that the rate of self-averaging increases with the dimension.

\end{remarks*}

We also study the behaviour of $\Var I_{f_L}(r)$ as $r\to 1^{-}$.

\begin{theorem}\label{th-r} Let $n\geq 2$ and fix $L>0$. Then, as $r\to 1^{-}$:
\begin{itemize}
 \item [(a)] If $L<n/2$ then, 
 \[
 \Var I_{f_L}(r)=C(L,n)\frac{1}{(1-r^2)^{2(n-L)}}[1+o(1)]
 \]
 where
 \[
  C(L,n)=\frac{L^2}{\sqrt{\pi}}\frac{2^{n-1}}{4^L (n-1)!}\dfrac{\Gamma(\frac n2-L) \Gamma(\frac{n+1}2-L)}{(\Gamma(n-L))^2} .
 \]

 \item[(b)] If $L=n/2$ then,
 \[
 \Var I_{f_L}(r)=C(n/2,n)\frac{1}{(1-r^2)^{n}}\log\bigl(\frac 1{1-r^2}\bigr)[1+o(1)]
 \]
 where
 \[
  C(n/2,n)=\frac{(n/2)^2}{(n-1)!\bigl(\Gamma(\frac n2)\bigr)^2}\ .
 \]
 
  \item[(c)] If $L>n/2$ then, 
 \[
 \Var I_{f_L}(r)=C(L,n)\frac{1}{(1-r^2)^{n}}[1+o(1)]
 \]
 where
 \[
  C(L,n)=\frac{L^2}{4\sqrt{\pi} (n-1)!}\sum_{k=1}^\infty \frac{\Gamma(Lk-\frac n2) \Gamma(Lk-\frac {n-1}2)}{(\Gamma(Lk+1))^2} 
  \bigl(Lk+\dfrac{n(n-1)}2\bigr)\ .
 \]
\end{itemize}

\end{theorem}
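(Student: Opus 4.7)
The plan is to work directly from Theorem \ref{integral}, writing
\[
\Var I_{f_L}(r)=\frac{r^{4n}L^2(1-r^2)^{2L-2n}}{(n-1)!(n-2)!}\,J(r),
\]
where $J(r)$ is the $\D$-integral in Theorem \ref{integral}(a). The three cases correspond to whether the integrand of $J(r)$ stays integrable on $\D$ as $r\to 1^-$ (case (a)), concentrates in a window of size $1-r^2$ around $w=1$ (case (c)), or sits at the borderline (case (b)). For $L<n/2$ the integrand converges pointwise on $\D\setminus\{1\}$ to $(1-|w|^2)^{n-2}|1-w|^{-2L}$ and is dominated in $L^1(\D)$, so dominated convergence gives $\lim_{r\to 1^-}J(r)=\int_\D(1-|w|^2)^{n-2}|1-w|^{-2L}\,dm/\pi$. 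I would evaluate this by expanding $|1-w|^{-2L}=(1-w)^{-L}(1-\bar w)^{-L}$, integrating term by term using the beta integral for $(1-|w|^2)^{n-2}|w|^{2k}$, and collapsing the sum to $\tfrac{1}{n-1}\,{}_2F_1(L,L;n;1)$. Gauss's summation (valid because $n-2L>0$) followed by Legendre's duplication applied to $\Gamma(n-2L)$ produces $C(L,n)$ in the stated form.

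For $L>n/2$ I substitute $w=1-(1-r^2)\zeta/r^2$, so that $1-r^2w=(1-r^2)(1+\zeta)$ and $\D$ fills out the half-plane $\Re\zeta>0$ as $r\to 1^-$. The powers of $1-r^2$ generated by the substitution combine with the prefactor to leave $(1-r^2)^{-n}$, and (up to a factor $2^{n-2}/r^{2n+4}$) the integrand converges pointwise to
\[
F(\zeta)=\frac{(\Re\zeta)^{n-2}|\zeta|^2}{(|1+\zeta|^{2L}-1)|1+\zeta|^2},
\]
which is integrable at $\zeta=0$ unconditionally and at infinity precisely when $L>n/2$. Dominated convergence then reduces the computation to the half-plane integral $I_0=\int_{\Re\zeta>0}F(\zeta)\,dm(\zeta)/\pi$. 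To evaluate $I_0$, I shift $\eta=1+\zeta$ (so $\Re\eta>1$, hence $|\eta|>1$) and expand $|\eta|^{-2}(|\eta|^{2L}-1)^{-1}=\sum_{k\ge 1}|\eta|^{-2-2Lk}$; for each $k$, translating back by $z=\eta-1$ and integrating out $\Im z$ produces two elementary $\int_\R(1+t^2)^{-\alpha}\{1,t^2\}\,dt$ beta integrals, and the remaining $\Re z$ integration is again a beta integral. Duplication applied to $\Gamma(2Lk-n)$ rearranges the constants into exactly the series defining $C(L,n)$.

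For $L=n/2$ both mechanisms above fail simultaneously: the $\D$-integral of case (a) diverges logarithmically near $w=1$, the half-plane integral of case (c) diverges logarithmically at infinity, and these are the same divergence. Performing the same rescaling as in case (c), on the annulus $1\ll|\zeta|\ll 1/(1-r^2)$ the rescaled integrand is asymptotically $(2\Re\zeta)^{n-2}/|\zeta|^n$, and polar integration yields
\[
J(r)=\frac{2^{n-2}}{\pi}\Bigl(\int_{-\pi/2}^{\pi/2}\cos^{n-2}\phi\,d\phi\Bigr)\log\frac{1}{1-r^2}+O(1)=\frac{2^{n-2}\,\Gamma((n-1)/2)}{\sqrt{\pi}\,\Gamma(n/2)}\log\frac{1}{1-r^2}+O(1).
\]
Multiplying by the prefactor and using the duplication identity $\Gamma((n-1)/2)\Gamma(n/2)=\sqrt{\pi}\,(n-2)!/2^{n-2}$ produces $C(n/2,n)(1-r^2)^{-n}\log(1/(1-r^2))[1+o(1)]$ with exactly the stated constant.

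The main obstacle is case (b): one has to isolate the log-generating annular regime in $\zeta$ and show rigorously that the near-origin piece ($|\zeta|\lesssim 1$), the far piece ($|\zeta|\gtrsim 1/(1-r^2)$), and the outer contribution from $|1-w|\ge\delta$ are all $O((1-r^2)^{-n})$, hence subdominant with respect to the logarithm. Cases (a) and (c), once the $r$-independent $L^1$ dominants are produced for dominated convergence, reduce to routine hypergeometric and beta-integral manipulations.
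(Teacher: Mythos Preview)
Your approach is correct and genuinely different from the paper's. The paper first passes from the disk integral of Theorem~\ref{integral}(a) to the $(s,\theta)$ coordinates of Lemma~\ref{heart} (where $s=(1-r^2)/|1-r^2w|$), establishes the order of growth of $K(L,r)$ by direct size estimates, and then extracts the constants through further changes of variable ($s=\sqrt{\epsilon(r)}\,x$, $y=x+1/x$) in case~(a), an analogous computation isolating a $\log$ in case~(b), and a direct evaluation of $K(L,1)$ via a three--term identity (Lemma~\ref{S}) in case~(c). You instead stay in the $w$ (or rescaled $\zeta$) variable throughout: in~(a) you apply dominated convergence and evaluate the limiting integral as $\tfrac{1}{n-1}\,{}_2F_1(L,L;n;1)$ via Gauss's theorem, which is markedly cleaner than the paper's route to the same constant; in~(c) your blow--up $w=1-(1-r^2)\zeta/r^2$ reduces everything to a single half--plane integral, and the domination $F_r\le F$ with $F\in L^1(\{\Re\zeta>0\})$ is immediate, so the series--and--beta evaluation is straightforward (and lands on the same sum the paper obtains only after Lemma~\ref{S}); in~(b) your annular extraction of the logarithm is exactly parallel to the paper's isolation of the $j=0$ term in its Fubini expansion.

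One caution: in case~(a) the naive domination $g_r\lesssim g_1$ fails on the shrinking cap $|1-w|\lesssim 1-r^2$ (there $g_r/g_1$ can be as large as $(1-r^2)^{-1}$), so you will need the two--region split you allude to---bounded convergence on $\{|1-w|\ge\delta\}$ together with a direct estimate $\int_{|1-w|<\delta}g_r\,dm\lesssim\delta^{\,n-2L}$ uniformly in $r$---rather than a single global dominant. This is routine once noticed. Also, in your description of case~(b) the ``$O((1-r^2)^{-n})$'' should read $O(1)$ if you are speaking of contributions to $J(r)$, or equivalently $O((1-r^2)^{-n})$ contributions to $\Var I_{f_L}(r)$; either way the conclusion stands. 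What the paper's coordinates buy is a unified framework in which the order--of--growth trichotomy is visible from a single one--dimensional integral in $s$; what your approach buys is shorter, more conceptual constant computations, especially the Gauss summation in~(a).
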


\begin{remarks*}
 (a) Notice that for $L$ fixed and $r\to 1^{-}$ there is also a strong self-averaging of the volume $I_{f_L}(r)$, which also increases with the dimension: 
 \[
   \frac{\Var I_{f_L}(r)}{(\E[I_{f_L}(r)])^2}=
   \begin{cases}
    \textrm O\left((1-r^2)^{2L}\right)\quad & L<n/2 \\
    \textrm O\left((1-r^2)^n\log\bigl(\frac 1{1-r^2}\bigr)\right) \quad &L=n/2\\
    \textrm O\left((1-r^2)^n\right) \quad &L>n/2\ .
   \end{cases}
 \]

 (b) As before, the proofs we give are valid only for $n\geq 2$ but the values $C(n,L)$ above match those obtained by J. Buckley in the disk \cite[Theorem 1]{Bu13} (since $1-r^2=2(1-r)+o(1-r)$).

 (c) As in dimension 1, there is a change of regime at $L=n/2$, which we don't know how to explain.
 
 (d) Using the asymptotics $\lim\limits_{k\to\infty}\frac{\Gamma(k+a)}{\Gamma(k) k^a}=1$ we see that as $L\to\infty$
 \begin{align*}
  C(L,n)&=\frac{L^2}{4\sqrt{\pi} \Gamma(n)}\left(\sum_{k=1}^\infty\frac 1{(Lk)^{n+1/2}}\right) (1+\textrm{o}(1))
  = \frac{1}{4\sqrt{\pi} \Gamma(n)} \frac{\zeta(n+1/2)}{L^{n-3/2}} (1+\textrm{o}(1))\,
 \end{align*}
 and in particular, by Theorem~\ref{th-L}, the limits in $r$ and $L$ can be interchanged:
 \begin{align*}
  \lim_{L\to\infty}\lim_{r\to 1^{-}} L^{n-3/2} (1-r^2)^{n} \Var I_{f_L}(r)&=
 \lim_{r\to 1^{-}} \lim_{L\to\infty} L^{n-3/2} (1-r^2)^{n} \Var I_{f_L}(r)\\
& = \frac{\zeta(n+1/2)}{4\sqrt{\pi} \Gamma(n)}\ .
 \end{align*}

\end{remarks*}

The scheme of the proofs of Theorems \ref{integral}, \ref{th-L}, and \ref{th-r}
is the same as in the one dimensional case (see \cite{Bu13}), although some of the computations are considerably more involved. 

The paper is structured as follows. Section 2 contains the proof of Theorem~\ref{integral}, which is a long and sometimes tricky computation. In Section 3 we prove Theorem~\ref{th-L}, while Section 4 is devoted to the proof of Theorem~\ref{th-r}. 

\section{Proof of the Theorem~\ref{integral}}

(a) As in dimension $n=1$, or as in the context of compact manifolds, the variance we want to study can be expressed through the bipotential. Denote $S(0,r)=\{\zeta\in\C^n : |\zeta|=r\}$ and $S=S(0,1)$. Then (see \cite[Section 2]{Bu13}, \cite[Theorem 3.11]{SZ08})
\begin{multline*}
 \Var  E_{f_L}(r)= 
 \int\limits_{S(0,r)} \int\limits_{S(0,r)} \sum_{j, k=1}^n\frac{\partial^2 \rho_L}{\partial \bar z_j \partial\bar w_k}(z,w)\; 
 \frac i{2\pi} d\bar z_j\wedge \beta_{n-1}(z) \wedge \frac i{2\pi}d\bar w_k \wedge \beta_{n-1}(w) \ .
\end{multline*}
Here $\rho_L(z,w)=\Li((\theta(z,w))^L)$, where $\Li(x)=\sum_{m=1}^\infty \frac {x^m}{m^2}$ and
\[
 \theta(z,w)=\frac{(1-|z|^2)(1-|w|^2)}{|1-z\cdot\bar w|^2}\ .
\]
Notice that $\theta^{L/2}(z,w)$ coincides with the modulus of the normalised kernel of the GAF.

For the sake of completeness we sketch the proof of this identity given in \cite{SZ06}*{Proposition 3.5}. Let $U=B(0,r)$. By the Edelman-Kostlan formula
\[
 E_{f_L}(r)-\E[E_{f_L}(r)]=\int_U\frac i{2\pi}\partial\bar\partial \log |\hat f_L|^2\wedge\beta_{n-1}=\int_{\partial U}\frac i{2\pi}\bar\partial \log|\hat f_L|^2\wedge\beta_{n-1}\ ,
\]
where
\[
 \hat f_L(z)=\frac{f_L(z)}{\sqrt{K_L(z,z)}}
\]
is the normalised GAF. Then
\begin{align*}
 \Var(E_{f_L}(r))&=\E\left[(E_{f_L}(r)-\E[E_{f_L}(r)])^2\right]\\
 &=\E\left[\int_{\partial U}\frac i{2\pi}\bar\partial \log|\hat f_L(z)|^2\wedge\beta_{n-1}(z) \int_{\partial U}\frac i{2\pi}\bar\partial \log|\hat f_L(w)|^2\wedge\beta_{n-1}(w)\right]\\
 &=\int_{\partial U}\int_{\partial U} \E\left[\frac i{2\pi}\bar\partial_z \log|\hat f_L(z)|^2\wedge \frac i{2\pi}\bar\partial_w \log|\hat f_L(w)|^2\right]\wedge\beta_{n-1}(z)\wedge\beta_{n-1}(w)\\
 &=\int_{\partial U}\int_{\partial U}\left(\frac i{2\pi}\right)^2 \bar\partial_z \bar\partial_w \E[\log|\hat f_L(z)|^2 \log|\hat f_L(w)|^2]\wedge\beta_{n-1}(z)\wedge\beta_{n-1}(w)\ .
\end{align*}
The result follows from the fact that (see for instance \cite{HKPV}*Lemma 3.5.2)
\[
 \E[\log|\hat f_L(z)|^2 \log|\hat f_L(w)|^2]=\rho_L(z,w)\ \boxtimes\ .
\]

To compute the  integrals above we use that $\beta_n=\bigwedge_{j=1}^n \frac i{2\pi} dz_j\wedge d\bar z_j$. For $j=1,\dots,n$ define the $(n-1,n)$-forms
\begin{equation}\label{gamma}
 \gamma_j(z)=\frac i{2\pi} d\bar z_j\wedge\bigwedge_{k\neq j} \frac i{2\pi} dz_k\wedge d\bar z_k= \frac i{2\pi} d\bar z_j\wedge \beta_{n-1}(z)\ .
\end{equation}

Denoting $S=S(0,1)$ and letting  $z=r\xi$, $w=r\eta$, where $\xi,\eta\in S$ we have, from the expression above,
\begin{equation}\label{iintS}
 \Var E_{f_L}(r)= \int_S \int_S \sum_{j,k=1}^n \frac{\partial^2 \rho_L}{\partial\bar z_j \partial\bar w_k} (r\xi,r\eta)\gamma_j(r\xi) \gamma_k(r\eta)\ .
\end{equation}

\begin{lemma}\label{laplacian-rho} Let $r\in(0,1)$ and $\xi, \eta\in S$. Then
\begin{multline*}
 \frac{\partial^2 \rho_L}{\partial\bar z_j \partial\bar w_k} (r\xi,r\eta)=\frac{L^2(1-r^2)^{2L-2}r^2}{|1-r^2\bar \xi\cdot\eta|^{2L}-(1-r^2)^{2L}}\times\\
\times \frac{[(1-r^2)\eta_j-\xi_j(1-r^2\bar \xi\cdot \eta)] [(1-r^2) \xi_k-\eta_k(1- \xi\cdot \bar \eta)]}{|1-r^2\bar \xi\cdot \eta|^2}\ .
\end{multline*}

\end{lemma}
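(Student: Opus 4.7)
The plan is to expand $\rho_L$ as a power series and handle one term at a time. Using $\Li(x)=\sum_{m=1}^\infty x^m/m^2$, I write $\rho_L(z,w)=\sum_{m=1}^\infty \theta(z,w)^{mL}/m^2$, compute the mixed partial $\partial^2/(\partial\bar z_j\,\partial\bar w_k)$ of each summand, and then recognize the re-summed series as a geometric series in $\theta^L$.

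The computational engine is a simple structural observation about $\log\theta$. First I would expand
\[
\log\theta(z,w)=\log(1-|z|^2)+\log(1-|w|^2)-\log(1-z\cdot\bar w)-\log(1-\bar z\cdot w),
\]
and observe that the mixed second partial $\partial^2\log\theta/(\partial\bar z_j\,\partial\bar w_k)$ vanishes identically: each summand is independent either of $\bar z$ or of $\bar w$. This observation is the only real insight of the proof.

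With it in hand, writing $\theta^{mL}=\exp(mL\log\theta)$ and applying the chain rule twice gives
\[
\frac{\partial^2\theta^{mL}}{\partial\bar z_j\,\partial\bar w_k}=(mL)^2\,\theta^{mL}\,\frac{\partial\log\theta}{\partial\bar z_j}\,\frac{\partial\log\theta}{\partial\bar w_k},
\]
the would-be $\partial^2\log\theta$ term being zero. Dividing by $m^2$ cancels the denominator of $\rho_L$, and summing the resulting geometric series produces
\[
\frac{\partial^2\rho_L}{\partial\bar z_j\,\partial\bar w_k}=L^2\,\frac{\theta^L}{1-\theta^L}\,\frac{\partial\log\theta}{\partial\bar z_j}\,\frac{\partial\log\theta}{\partial\bar w_k}.
\]

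The remaining step is the specialisation to $(r\xi,r\eta)$. Using $|z|^2=|w|^2=r^2$, $\bar z\cdot w=r^2\bar\xi\cdot\eta$, $z\cdot\bar w=r^2\xi\cdot\bar\eta$, the first-order derivatives read
\[
\frac{\partial\log\theta}{\partial\bar z_j}\bigg|_{(r\xi,r\eta)}=\frac{r\bigl[(1-r^2)\eta_j-(1-r^2\bar\xi\cdot\eta)\xi_j\bigr]}{(1-r^2)(1-r^2\bar\xi\cdot\eta)},
\]
and analogously for $\partial\log\theta/\partial\bar w_k$ with the roles of $\xi$ and $\eta$ swapped and conjugated. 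Combining the common denominators via $|1-r^2\bar\xi\cdot\eta|^2=(1-r^2\bar\xi\cdot\eta)(1-r^2\xi\cdot\bar\eta)$ and using $\theta^L/(1-\theta^L)=(1-r^2)^{2L}/[|1-r^2\bar\xi\cdot\eta|^{2L}-(1-r^2)^{2L}]$ yields the stated formula. The main obstacle is not conceptual but purely the algebraic bookkeeping of the various $r^2$-, $(1-r^2)$- and conjugate factors; the step that streamlines everything, and without which one would face two genuinely distinct series contributions, is the vanishing of the mixed second derivative of $\log\theta$.
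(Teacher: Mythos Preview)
Your proof is correct and reaches the same intermediate formula
\[
\frac{\partial^2\rho_L}{\partial\bar z_j\,\partial\bar w_k}=L^2\,\frac{\theta^L}{1-\theta^L}\,\frac{\partial\log\theta}{\partial\bar z_j}\,\frac{\partial\log\theta}{\partial\bar w_k}
=\frac{L^2}{\theta^2}\,\frac{\theta^L}{1-\theta^L}\,\frac{\partial\theta}{\partial\bar z_j}\,\frac{\partial\theta}{\partial\bar w_k}
\]
as the paper, after which the specialisation to $(r\xi,r\eta)$ is identical. The packaging differs slightly: the paper differentiates $\Li$ via the closed form $\Li'(x)=\frac{1}{x}\log\frac{1}{1-x}$ and then verifies the identity
\(
\partial^2\theta/(\partial\bar z_j\,\partial\bar w_k)-\theta^{-1}(\partial\theta/\partial\bar z_j)(\partial\theta/\partial\bar w_k)=0
\)
by direct computation of $\partial\theta/\partial\bar z_j$, whereas you expand $\Li$ termwise and use that $\partial^2\log\theta/(\partial\bar z_j\,\partial\bar w_k)=0$. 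These two vanishing statements are the same identity, but your justification via the additive splitting $\log\theta=\log(1-|z|^2)+\log(1-|w|^2)-\log(1-z\cdot\bar w)-\log(1-\bar z\cdot w)$ is more transparent than the paper's explicit check. Either way the crux is this vanishing; the series-versus-derivative-of-$\Li$ distinction is cosmetic.
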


\begin{proof}
Since $\Li'(x)=\frac 1x\log\left(\frac 1{1-x}\right)$, we have
 \begin{align*}
  \frac{\partial \rho_L}{\partial\bar z_j}&=\Li'(\theta^L)L\theta^{L-1} \frac{\partial \theta}{\partial\bar z_j}=\frac 1{\theta^L} \log\left(\frac 1{1-\theta^L}\right) L\theta^{L-1} \frac{\partial \theta}{\partial\bar z_j}\\
  &=\frac{L}{\theta} \log\left(\frac 1{1-\theta^L}\right) \frac{\partial \theta}{\partial\bar z_j}
 \end{align*}
 and
 \begin{align*}
  \frac{\partial^2 \rho_L}{\partial\bar z_j \partial\bar w_k}&=-\frac{L}{\theta^2} \frac{\partial \theta}{\partial\bar w_k} \log\left(\frac 1{1-\theta^L}\right)\frac{\partial \theta}{\partial\bar z_j}+ \frac{L}{\theta}\frac{L\theta^{L-1}}{1-\theta^L} \frac{\partial \theta}{\partial\bar w_k} \frac{\partial \theta}{\partial\bar z_j} + \frac{L}{\theta} \log\left(\frac 1{1-\theta^L}\right) \frac{\partial^2 \theta}{\partial\bar z_j \partial\bar w_k}\\
  &=\frac{L}{\theta} \log\left(\frac 1{1-\theta^L}\right)\left[\frac{\partial^2 \theta}{\partial\bar z_j \partial\bar w_k}-\frac 1{\theta} \frac{\partial \theta}{\partial\bar z_j}\frac{\partial \theta}{\partial\bar w_k}\right]+\frac{L^2}{\theta^2}\frac{\theta^L}{1-\theta^L} \frac{\partial \theta}{\partial\bar z_j}\frac{\partial \theta}{\partial\bar w_k}\ .
 \end{align*}
 Using the definition of $\theta$ given above,
 \[
  \frac{\partial \theta}{\partial\bar z_j}=\frac{-z_j(1-|w|^2)}{|1-\bar z\cdot w|^2}+\frac{(1-|z|^2)(1-|w|^2)}{(1- z\cdot \bar w)(1-\bar z\cdot w)^2} w_j=
  \frac{1-|w|^2}{|1-\bar z\cdot w|^2}\left(\frac{1-|z|^2}{1-\bar z\cdot w}w_j-z_j\right)\ .
 \]
We deduce that
 \[
  \frac{\partial^2 \theta}{\partial\bar z_j \partial\bar w_k}-\frac 1{\theta} \frac{\partial \theta}{\partial\bar z_j}\frac{\partial \theta}{\partial\bar w_k}=0
 \]
and therefore, 
 \begin{align*}
  \frac{\partial^2 \rho_L}{\partial\bar z_j \partial\bar w_k}&=\frac{L^2}{\theta^2} \frac{\theta^L}{1-\theta^L} \frac{\partial \theta}{\partial\bar z_j}\frac{\partial \theta}{\partial\bar w_k}\\
  &=\frac{L^2}{\theta^2} \frac{\theta^L}{1-\theta^L} \frac{(1-|z|^2)(1-|w|^2)}{|1-\bar z\cdot w|^4} \left(\frac{1-|z|^2}{1-\bar z\cdot w}w_j-z_j\right)\left(\frac{1-|w|^2}{1- z\cdot\bar w} z_k-w_k\right).
  \end{align*}
Substituting $z=r\xi$, $w=r\eta$, and using the identity
\[
 \frac{\theta^L(r\xi,r\eta)}{1-\theta^L(r\xi,r\eta)}=\frac{(1-r^2)^{2L}}{|1-r^2\bar \xi\cdot \eta|^{2L}}\frac 1{1-\frac{(1-r^2)^{2L}}{|1-r^2\bar \xi\cdot \eta|^{2L}}}=\frac{(1-r^2)^{2L}}{|1-r^2\bar \xi\cdot \eta|^{2L}-(1-r^2)^{2L}}
\]
we get the result.
\end{proof}

Plugging this into \eqref{iintS}  we finally have
\begin{equation}\label{reduction}
\Var I_{f_L}(r)=  L^2 (1-r^2)^{2(L-1)} r^2 \mathcal I(L,r)\ ,
\end{equation}
where
\[
\mathcal I(L,r)=\int_S \int_S \frac{1}{|1-r^2\bar \xi\cdot \eta|^{2L}-(1-r^2)^{2L}} \frac{\Omega(r\xi, r\eta)}{|1-r^2\bar \xi\cdot \eta|^{2}}
\]
and $\Omega(r\xi, r\eta)$ is the $(n-1,n-1)$-form (in $\xi$ and $\eta$) given by
\[
 \Omega(r\xi, r\eta)=\sum_{j,k=1}^n [(1-r^2)\eta_j-\xi_j(1-r^2\bar \xi\cdot \eta)][(1-r^2)\xi_k-\eta_k(1- r^2\xi\cdot \bar \eta)] \gamma_j(r\xi)\gamma_k(r\eta).
\]

We operate
\begin{multline*}
 \bigl[(1-r^2)\eta_j-\xi_j(1-r^2\bar \xi\cdot \eta)\bigr]\bigl[(1-r^2)\xi_k-\eta_k(1- r^2\xi\cdot \bar \eta)\bigr]=\\
 =(1-r^2)^2\xi_k\eta_j-(1-r^2) (1-r^2\bar \xi\cdot \eta) \xi_j\xi_k- (1-r^2) (1-r^2 \xi\cdot \bar\eta) \eta_j \eta_k+|1-r^2\bar \xi\cdot \eta|^2\xi_j\eta_k
\end{multline*}
and split $\mathcal I(L,r)=\sum_{m=1}^4 \mathcal I_m(L,r)$, where
\begin{align*}
 \mathcal I_1(L,r)&=\int_S \int_S \frac {(1-r^2)^2}{|1-r^2\bar \xi\cdot \eta|^{2L}-(1-r^2)^{2L}}
 \frac {1}{|1-r^2\bar \xi\cdot \eta|^{2}} \left(\sum_{j=1}^n\eta_j \gamma_j(r\xi)\right)\left(\sum_{k=1}^n \xi_k\gamma_k(r\eta)\right)\\
\mathcal   I_2(L,r)&=-\int_S \int_S \frac {1-r^2}{|1-r^2\bar \xi\cdot \eta|^{2L}-(1-r^2)^{2L}}
 \frac {1}{1-r^2 \xi\cdot \bar\eta} \left(\sum_{j=1}^n\xi_j \gamma_j(r\xi)\right)\left(\sum_{k=1}^n \xi_k\gamma_k(r\eta)\right)\\
\mathcal   I_3(L,r)&=-\int_S \int_S \frac {1-r^2}{|1-r^2\bar \xi\cdot \eta|^{2L}-(1-r^2)^{2L}}
 \frac {1}{1-r^2 \bar\xi\cdot \eta} \left(\sum_{j=1}^n\eta_j \gamma_j(r\xi)\right)\left(\sum_{k=1}^n \eta_k\gamma_k(r\eta)\right)\\
\mathcal  I_4(L,r)&= \int_S \int_S \frac {1}{|1-r^2\bar \xi\cdot \eta|^{2L}-(1-r^2)^{2L}}
 \left(\sum_{j=1}^n\xi_j \gamma_j(r\xi)\right)\left(\sum_{k=1}^n \eta_k\gamma_k(r\eta)\right)\ .
\end{align*}

In order to compute these integrals we first fix $\eta$ and consider a unitary transformation $\mathcal U$ taking $e_1=(1,0,\dots,0)$ to $\eta$. Denote $v^1=\eta$ and $v^j=\mathcal U(e_j)$, $j>1$, where $e_j=(0,\dots,\stackrel{\stackrel{j}{\smile}}{1},\dots,0)$, so that $\eta, v^2, \dots,v^n$ is an orthonormal system. 

Consider the change of variables $\xi=\mathcal U(\alpha)=\sum_{j=1}^n \alpha_j v^j$. Then 
\[
 \xi\cdot\bar\eta=\mathcal U(\alpha)\cdot \overline{\mathcal U(e_1)}=\alpha\cdot \bar e_1=\alpha_1\ .
\]
Also
\[
 \xi_k=\sum_{m=1}^n \alpha_m v^m_k\qquad ,\qquad \bar\xi_k=\sum_{j=1}^n \bar\alpha_m \bar v^m_k\ ,
\]
and therefore
\[
 d\xi_k=\sum_{m=1}^n v^m_k d\alpha_m \qquad  ,\qquad  d\bar\xi_k=\sum_{m=1}^n \bar v^m_k d\bar\alpha_m \ .
\]

Since $\beta(\xi)=\frac i{2\pi}\partial\bar\partial |\xi|^2$ is invariant by unitary transformations
\begin{equation}\label{change-gamma}
  \gamma_j(r\xi)=\frac i{2\pi}r d\bar\xi_j\wedge \beta_{n-1}(r\xi)=
  \frac i{2\pi} \left(r\sum_{m=1}^n \bar v^m_j d\bar\alpha_j\right)\wedge
  \beta_{n-1}(r\alpha)=\sum_{m=1}^n \bar v^m_j \gamma_m(r\alpha)\ .
\end{equation}

Now parametrise $\alpha=\mathcal U^{-1}(\xi)\in S$ with coordinates $w=(w_1,\dots,w_{n-1})\in\B_{n-1}$, $\psi\in[0,2\pi)$ in the following way
\[
 \begin{cases}
  \alpha_j=w_j\qquad \qquad &\ j=1,\dots,n-1\\
  \alpha_n=\sqrt{1-|w|^2} e^{i\psi}\ .&
 \end{cases}
\]

Let us write the forms $\gamma_j(r\alpha)$ in this parametrisation.

\begin{lemma}\label{change} Let $d\beta_{n-1}(w)=\bigwedge_{k=1}^{n-1}\frac i{2\pi} dw_k\wedge d\bar w_k$. Then
 \[  
\begin{cases}
  \gamma_j(\alpha)&=\bar w_j \dfrac{d\psi}{2\pi} \wedge d\beta_{n-1}(w)\qquad j=1,\dots,n-1,\\
  \gamma_n(\alpha)&= \sqrt{1-|w|^2} e^{-i\psi}\dfrac{d\psi}{2\pi}\wedge d\beta_{n-1}(w)
 \end{cases}
 \]
\end{lemma}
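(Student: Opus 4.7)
The plan is to perform a direct computation in the coordinates $(w,\psi)$, after first exploiting a convenient algebraic simplification of $\beta_{n-1}$. Write $X_k=\frac{i}{2\pi}d\alpha_k\wedge d\bar\alpha_k$, so that $\beta(\alpha)=\sum_{k=1}^{n}X_k$. Since each $X_k$ is a $(1,1)$-form that contains $d\alpha_k\wedge d\bar\alpha_k$, it satisfies $X_k\wedge X_k=0$; hence when expanding $\beta^{\,n-1}/(n-1)!$ only the square-free monomials survive, and
\[
\beta_{n-1}(\alpha)=\sum_{k=1}^{n}\bigwedge_{\ell\ne k}X_\ell.
\]
Consequently, in $\gamma_j(\alpha)=\tfrac{i}{2\pi}d\bar\alpha_j\wedge\beta_{n-1}(\alpha)$ every term with $k\ne j$ vanishes (it would repeat $d\bar\alpha_j$), and I am reduced to
\[
\gamma_j(\alpha)=\frac{i}{2\pi}d\bar\alpha_j\wedge\bigwedge_{\ell\ne j}X_\ell.
\]

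Next I pass to the coordinates $(w_1,\dots,w_{n-1},\psi)$ on $S$. Setting $g=\sqrt{1-|w|^2}$, I compute
\[
d\alpha_n=e^{i\psi}dg+ig\,e^{i\psi}d\psi,\qquad d\bar\alpha_n=e^{-i\psi}dg-ig\,e^{-i\psi}d\psi,
\]
with $dg=-\tfrac{1}{2g}\sum_{k=1}^{n-1}(w_k\,d\bar w_k+\bar w_k\,dw_k)$. A short wedge calculation gives
\[
X_n=\frac{i}{2\pi}d\alpha_n\wedge d\bar\alpha_n=-\frac{1}{2\pi}\sum_{k=1}^{n-1}(w_k\,d\bar w_k+\bar w_k\,dw_k)\wedge d\psi.
\]

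For $1\le j\le n-1$: here $\bigwedge_{\ell\ne j}X_\ell$ contains $X_n$ together with $\bigwedge_{\ell\in[n-1],\ell\ne j}\tfrac{i}{2\pi}dw_\ell\wedge d\bar w_\ell$, which already absorbs every $dw_\ell,d\bar w_\ell$ for $\ell\ne j$. So in the sum defining $X_n$ only the $k=j$ summand can survive, and within it only the piece $\bar w_j\,dw_j\wedge d\psi$ does not collide with the pre-existing $d\bar w_j$. Reordering the even-degree factors and collecting powers of $\tfrac{i}{2\pi}$ yields the asserted $\gamma_j(\alpha)=\bar w_j\,\tfrac{d\psi}{2\pi}\wedge d\beta_{n-1}(w)$. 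For $j=n$: the form $\bigwedge_{\ell\ne n}X_\ell$ equals $d\beta_{n-1}(w)$, which uses all the $dw_\ell,d\bar w_\ell$; hence the $dg$ contribution in $d\bar\alpha_n$ dies and only the $d\psi$ piece survives, producing $\gamma_n(\alpha)=g\,e^{-i\psi}\tfrac{d\psi}{2\pi}\wedge d\beta_{n-1}(w)$.

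The only genuinely delicate step is the bookkeeping in the case $j<n$: one must track the sign arising from moving $d\bar w_j$ past $dw_j$ and verify that the factor $-\tfrac{1}{2\pi}$ from $X_n$ combines with the $\tfrac{i}{2\pi}$ outside and with the $\bigl(\tfrac{i}{2\pi}\bigr)^{n-2}$ from the remaining $X_\ell$'s to give exactly $\bigl(\tfrac{i}{2\pi}\bigr)^{n-1}$, i.e. $d\beta_{n-1}(w)$. Everything else is essentially forced by the degree-vanishing observation coming from $X_k\wedge X_k=0$.
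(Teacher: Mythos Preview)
Your proof is correct and follows essentially the same approach as the paper: both reduce to computing $\frac{i}{2\pi}d\bar\alpha_j\wedge\bigwedge_{\ell\neq j}\frac{i}{2\pi}d\alpha_\ell\wedge d\bar\alpha_\ell$ in the $(w,\psi)$ coordinates and isolate the contribution of the $\alpha_n$ factor. Your version is slightly more streamlined in that you compute $X_n=\frac{i}{2\pi}d\alpha_n\wedge d\bar\alpha_n$ once as a closed expression and then argue by degree-vanishing, whereas the paper carries the full expansions of $d\alpha_n$ and $d\bar\alpha_n$ separately; note also that your opening reduction to $\gamma_j=\frac{i}{2\pi}d\bar\alpha_j\wedge\bigwedge_{\ell\neq j}X_\ell$ is already the paper's definition \eqref{gamma}, so that paragraph is redundant.
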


\begin{proof}
Directly from the definition we have
\begin{align*}
 d\alpha_n&=\sum_{l=1}^{n-1}\left(\frac{-\bar w_l e^{i\psi}}{2\sqrt{1-|w|^2}} d w_l+ \frac{- w_l e^{i\psi}}{2\sqrt{1-|w|^2}} d \bar w_l\right)+\sqrt{1-|w|^2} ie^{i\psi} d\psi\\
 d\bar\alpha_n&=\sum_{l=1}^{n-1}\left(\frac{-\bar w_l e^{-i\psi}}{2\sqrt{1-|w|^2}} d w_l+ \frac{- w_l e^{-i\psi}}{2\sqrt{1-|w|^2}} d \bar w_l\right)-\sqrt{1-|w|^2} ie^{-i\psi} d\psi\ .
\end{align*}

Assume first that $j<n$. Then  $\bigwedge_{k\neq j}\frac i{2\pi} d\alpha_k\wedge d\bar \alpha_k$ contains the factor $\frac i{2\pi}  d\alpha_n\wedge d\bar \alpha_n$, so
\begin{multline*}
 \bigwedge_{k\neq j}\frac i{2\pi}  d\alpha_k\wedge d\bar \alpha_k=\bigwedge_{k\neq j,n}\frac i{2\pi}  d\alpha_k\wedge d\bar \alpha_k\wedge \frac i{2\pi}  d\alpha_n\wedge d\bar \alpha_n\\
 =\bigwedge_{k\neq j}\frac i{2\pi}  d w_k\wedge d\bar w_k\wedge \frac i{2\pi} \left(\frac{-\bar w_j e^{i\psi}}{2\sqrt{1-|w|^2}} d w_j+ \frac{- w_j e^{i\psi}}{2\sqrt{1-|w|^2}} d \bar w_j+\sqrt{1-|w|^2} ie^{i\psi} d\psi\right)\wedge\\
 \wedge\left(\frac{-\bar w_l e^{-i\psi}}{2\sqrt{1-|w|^2}} d w_l+ \frac{- w_l e^{-i\psi}}{2\sqrt{1-|w|^2}} d \bar w_l-\sqrt{1-|w|^2} ie^{-i\psi} d\psi\right)\\
 =\frac 1{2\pi}  d\psi\wedge(\bar w_j \, dw_j +  w_j \, d\bar w_j)\wedge \bigwedge_{k\neq j}\frac i{2\pi}  d w_k\wedge d\bar w_k\ .
\end{multline*}
Therefore
\begin{align*}
 \gamma_j(\alpha)&=\frac i{2\pi}  d\bar\alpha_j\wedge \bigwedge_{k\neq j}\frac i{2\pi}  d \alpha_k\wedge d\bar \alpha_k\\
 &=\frac i{2\pi}  d\bar w_j\wedge\frac 1{2\pi} d\psi\wedge(\bar w_j \, dw_j +  w_j \, d\bar w_j)\wedge \bigwedge_{k\neq j}\frac i{2\pi}  d w_k\wedge d\bar w_k\\
 &=\bar w_j \frac{d\psi}{2\pi}\wedge \bigwedge_{k=1}^{n-1}\frac i{2\pi}  d w_k\wedge d\bar w_k
\end{align*}

Assume now that $j=n$. Then
\begin{align*}
  \gamma_n(\alpha)&=\frac i{2\pi}  d\bar\alpha_n\wedge \bigwedge_{k<n}\frac i{2\pi}  d \alpha_k\wedge d\bar \alpha_k= 
\sqrt{1-|w|^2}  e^{-i\psi}\; \frac{d\psi}{2\pi}\wedge d\beta_{n-1}(w)
\end{align*}
\end{proof}

We finally use the parametrisation of Lemma~\ref{change} to compute the integrals $\mathcal I_j(L,r)$. We begin with $\mathcal I_4$.

\underline{ $\mathcal I_4(L,r)$}. 
First make the change of variables $\xi=\mathcal U(\alpha)$.
By invariance by unitary transformations,
\[
\sum_{j=1}^n \xi_j \gamma_j(r\xi)=\frac i{2\pi}\bar\partial |\xi|^2\wedge \beta_{n-1}(r\xi)=\sum_{j=1}^n \alpha_j \gamma_j(r\alpha),
\]
and therefore
\begin{align*}
 \mathcal I_4(L,r)&=\int_S\int_S \frac{1}{|1-r^2\alpha_1|^{2L}-(1-r^2)^{2L}}\left(\sum_{j=1}^n \alpha_j \gamma_j(r\alpha)\right) \left(\sum_{k=1}^n \eta_k \gamma_k(r\eta)\right)\\
 &=A_n \int_S \frac{1}{|1-r^2\alpha_1|^{2L}-(1-r^2)^{2L}}\left(\sum_{j=1}^n \alpha_j \gamma_j(r\alpha)\right)\ ,
\end{align*}
where, by Stokes and the identity $\beta^n=d\nu$,
\begin{equation}\label{an}
 A_n:=\int_S \sum_{k=1}^n \eta_k \gamma_k(r\eta)=r^{2n-1} \int_S \frac i{2\pi}\bar\partial |\eta|^2\wedge \beta_{n-1}(\eta)=r^{2n-1} 
 \int_{\B_n} \frac{\beta^n(z)}{(n-1)!}=\frac{r^{2n-1}}{(n-1)!}\ .
\end{equation}

We compute the integral in $\alpha$ 
using the parametrisation of Lemma~\ref{change}. Since $\gamma_j(r\alpha)=r^{2n-1}\gamma_j (\alpha)$ and
\begin{align}\label{BN}
 \sum_{j=1}^n \alpha_j \gamma_j(\alpha)&=\sum_{j=1}^{n-1} w_j\bar w_j \frac{d\psi}{2\pi}\wedge d\beta_{n-1}(w)
 +\sqrt{1-|w|^2} e^{i\psi} \sqrt{1-|w|^2}e^{-i\psi}\, \frac{d\psi}{2\pi}\wedge d\beta_{n-1}(w)  \nonumber \\
 &=(|w|^2+1-|w|^2) \, \frac{d\psi}{2\pi}\wedge d\beta_{n-1}(w)= \frac{d\psi}{2\pi}\wedge d\beta_{n-1}(w), 
\end{align}
we have, after integrating $w_2,\dots,w_{n-1},\psi$,
\begin{align*}
  \mathcal I_4(L,r)&=A_n \int_{\B_{n-1}}\int_0^{2\pi}\frac {r^{2n-1}}{|1-r^2 w_1|^{2L}-(1-r^2)^{2L}}\,  \frac{d\psi}{2\pi}\wedge d\beta_{n-1}(w)\\
  &= A_n  \int_{\D}\frac  {r^{2n-1}}{|1-r^2 w_1|^{2L}-(1-r^2)^{2L}}\frac{1}{(n-2)!} (1-|w_1|^2)^{n-2} \frac{dm(w_1)}{\pi}\\
  &=\frac{A_n r^{2n-1}}{(n-2)!}\int_{\D}\frac {(1-|w_1|^2)^{n-2}}{|1-r^2 w_1|^{2L}-(1-r^2)^{2L}} \frac{dm(w_1)}{\pi}\ .
\end{align*}

\underline{$\mathcal I_1(L,r)$}. As in the previous case, first we change $\xi=\mathcal U(\alpha)$. By \eqref{change-gamma}, and since $\eta=v^1$
and the system $\{v^l\}_{l=1}^n$ is orthonormal, the form to integrate is
\begin{align*}
 \Gamma_1&:=\sum_{j,k=1}^n \eta_j\xi_k \gamma_j(r\xi)\gamma_k(r\eta)=\sum_{j,k=1}^n \eta_j\left(\sum_{l=1}^n \alpha_l v_k^l\right)\left(\sum_{m=1}^n\bar v_j^m \gamma_m(r\alpha)\right) \gamma_k(r\eta)\\
 &=\sum_{k,l,m=1}^n \left(\sum_{j=1}^n v_j^l \bar v_j^m\right) \alpha_l v_k^l \gamma_m(r\alpha) \gamma_k(r\eta)
=\sum_{k=1}^n \left(\sum_{l=1}^n \alpha_l v_k^l\right) \gamma_1(r\alpha) \gamma_k(r\eta)\ .
\end{align*}

Now we use the parametrisation given in
Lemma~\ref{change}. Then
\begin{align*}
 \sum_{l=1}^n \alpha_l v_k^l&=w_1\eta_k+\sum_{l=2}^{n-1} w_l v_k^l +\sqrt{1-|w|^2} e^{i\psi} v_k^n\ ,
\end{align*}
and therefore
\begin{align*}
 \left(\sum_{l=1}^n \alpha_l v_k^l\right) \gamma_1(r\alpha)&=r^{2n-1}\left(w_1\eta_k+\sum_{l=2}^{n-1} w_l v_k^l +\sqrt{1-|w|^2} e^{i\psi} v_k^n\right)\bar w_1 \frac{d\psi}{2\pi}\wedge d\beta_{n-1}(w)\\
 &=r^{2n-1}\left(|w_1|^2 \eta_k +\sum_{l=2}^{n-1} \bar w_1 w_l v_k^l + \bar w_1 \sqrt{1-|w|^2} e^{i\psi} v_k^n\right) \frac{d\psi}{2\pi}\wedge d\beta_{n-1}(w)\ .
\end{align*}

This form will be integrated against a function which does not depend on $w_2,\dots,w_{n-1},\psi$. The
last term in the sum will vanish when integrating in $\psi$, while the second term will vanish when integrating in $w_2,\dots,w_n$. Thus, in terms of the integration we want to perform,
\begin{align*}
 \Gamma_1&=r^{2n-1}\sum_{k=1}^n |w_1|^2 \eta_k\; \frac{d\psi}{2\pi}\wedge d\beta_{n-1}(w)\gamma_k(\eta)+ \textrm{vanishing terms} \\
 &=r^{2n-1}\left(\sum_{k=1}^n\eta_k \gamma_k(\eta)\right)  |w_1|^2\; \frac{d\psi}{2\pi}\wedge d\beta_{n-1}(w)+ \textrm{vanishing terms}\ .
\end{align*}
Letting $A_n$ be the constant \eqref{an}, and integrating $w_2,\dots,w_{n-1},\psi$, we obtain
\begin{align*}
 \mathcal I_1(L,r)&=A_n\int_{\B_{n-1}}\int_0^{2\pi}\frac {r^{2n-1}}{|1-r^2w_1|^{2L}-(1-r^2)^{2L}}\frac{(1-r^2)^{2}}{|1-r^2w_1|^{2}} |w_1|^2 \frac{d\psi}{2\pi}\wedge d\beta_{n-1}(w)\\
 &=\frac{r^{2n-1} A_n}{(n-2)!}\int_{\D}\frac {(1-|w_1|^2)^{n-2}}{|1-r^2w_1|^{2L}-(1-r^2)^{2L}}\frac{(1-r^2)^{2}}{|1-r^2w_1|^{2}}|w_1|^2 \frac{dm(w_1)}{\pi}
\end{align*}

\underline{$\mathcal I_2(L,r)$}. Once more, changing $\xi=\mathcal U(\alpha)$ and parametrising as in Lemma~\ref{change},
\begin{align*}
 \Gamma_2&:=\sum_{j,k=1}^n \xi_j\xi_k \gamma_j(r\xi)\gamma_k(r\eta)=\sum_{j,k=1}^n \left(\sum_{l=1}^n \alpha_l v_j^l\right)
\left(\sum_{m=1}^n \alpha_m v_k^m\right) \left(\sum_{t=1}^n \bar v_j^t \gamma_t(r\alpha)\right)\gamma_k(r\eta)\\
&=\sum_{k,l,m,t=1}^n \alpha_l \alpha_m v_k^m \left(\sum_{j=1}^n v_j^l  \bar v_j^t\right)\gamma_t(r\alpha)\gamma_k(r\eta)
=\left(\sum_{l=1}^n \alpha_l \gamma_l(r\alpha)\right)\left(\sum_{k,m=1}^n \alpha_m v_k^m \gamma_k(r\eta)\right)\ .
 \end{align*}
Hence
\begin{align*}
 \sum_{k,m=1}^n \alpha_m v_k^m \gamma_k(r\eta)&=\sum_{m=1}^{n-1} w_m\left(\sum_{k=1}^n v_k^m \gamma_k(r\eta)\right)+\sqrt{1-|w|^2} e^{i\psi}\left(\sum_{k=1}^n v_k \gamma_k (r\eta)\right)\\
 &=w_1 \sum_{k=1}^n \eta_k \gamma_k(r\eta)+ \sum_{m=2}^{n-1} w_m\left(\sum_{k=1}^n v_k^m \gamma_k(r\eta)\right)+\sqrt{1-|w|^2} e^{i\psi}\left(\sum_{k=1}^n v_k \gamma_k (r\eta)\right)
\end{align*}
and therefore, using \eqref{BN},
\begin{multline*}
 \Gamma_2=r^{2n-1}\left[w_1 \sum_{k=1}^n \eta_k \gamma_k(r\eta)+ \sum_{m=2}^{n-1} w_m\left(\sum_{k=1}^n v_k^m \gamma_k(r\eta)\right)+\right. \\
 \left. +\sqrt{1-|w|^2} e^{i\psi}\left(\sum_{k=1}^n v_k \gamma_k (r\eta)\right)\right] \frac{d\psi}{2\pi}\wedge d\beta_{n-1}(w)\ .
 \end{multline*}
As before, the third term in the bracket will vanish when integrating in $\psi$, while the second term will vanish when integrating in $w_2,\dots, w_{n-1}$. Thus, finally,
\begin{align*}
 \mathcal I_2(L,r)&=-A_n\int_{\B_{n-1}} \int_0^{2\pi} \frac {r^{2n-1}}{|1-r^2w_1|^{2L}-(1-r^2)^{2L}}\frac{1-r^2}{1-r^2w_1} w_1 \frac{d\psi}{2\pi}\wedge d\beta_{n-1}(w)\\
 &=-\frac{ r^{2n-1}A_n}{(n-2)!}\int_{\D}\frac {(1-|w|^2)^{n-2}}{|1-r^2w_1|^{2L}-(1-r^2)^{2L}}\frac{1-r^2}{1-r^2w_1} w_1 \frac{dm(w_1)}{\pi}\ .
\end{align*}

\underline{$\mathcal I_3(L,r)$}. Here the form to integrate is, in the terms of the parametrisation,
\begin{align*}
 \Gamma_3&=\sum_{j,k=1}^n \eta_j\eta_k \gamma_j(r\xi)\gamma_k(r\eta)= \sum_{k=1}^n \eta_k \sum_{j=1}^n v_j^1 \left(\sum_{m=1}^n \bar v_j^m \gamma_m(r\alpha)\right)\gamma_k(r\eta)\\
 &=\sum_{k=1}^n \eta_k \left[\sum_{m=1}^n\left(\sum_{j=1}^n v_j^1 \bar v_j^m\right) \gamma_m(r\alpha)\right] \gamma_k(r\eta)
 =r^{2n-1}\left(\sum_{k=1}^n \eta_k \gamma_k(r\eta)\right) \bar w_1 \frac{d\psi}{2\pi}\wedge d\beta_{n-1}(w)\ .
 \end{align*}
Hence
 \begin{align*}
  \mathcal I_3(L,r)&=-A_n\int_{\B_{n-1}} \int_0^{2\pi} \frac {r^{2n-1}}{|1-r^2w_1|^{2L}-(1-r^2)^{2L}}\frac{1-r^2}{1-r^2 \bar w_1}  \bar w_1 \frac{d\psi}{2\pi}\wedge d\beta_{n-1}(w)\\
 &=-\frac{r^{2n-1}A_n}{(n-2)!}\int_{\D}\frac {(1-|w|^2)^{n-2}}{|1-r^2w_1|^{2L}-(1-r^2)^{2L}}\frac{1-r^2}{1-r^2 \bar w_1} \bar w_1 \frac{dm(w_1)}{\pi} \ .
 \end{align*}

Finally, adding up the expressions of $\mathcal I_j(L,r)$ and using that
\begin{align*}
 1+\frac{(1-r^2)^2}{|1-r^2w_1|^{2}}-\frac{1-r^2}{1-r^2w_1} w_1 -\frac{1-r^2}{1-r^2 \bar w_1} \bar w_1 =\left|1-\frac{(1-r^2) w_1}{1-r^2w_1}\right|^2
 =\frac{|1-w_1|^2}{|1-r^2w_1|^{2}},
\end{align*}
we obtain
\[
\mathcal I(L,r)=\sum_{m=1}^4 \mathcal I_m(L,r)=\frac{r^{2n-1} A_n}{(n-2)!}\int_{\D} \frac {(1-|w|^2)^{n-2}}{|1-r^2w_1|^{2L}-(1-r^2)^{2L}} \frac{|1-w_1|^2}{|1-r^2w_1|^{2}}\;  \frac{dm(w_1)}{\pi}\ .
\]
Plugging this into \eqref{reduction} and writing down the value $A_n$ (see \eqref{an}) we get Theorem~\ref{integral}(a).

\bigskip

(b) We just need to repeat the proof of (a) replacing $\beta$ by $\omega$. The corresponding bipotential formula for the variance is now
\begin{multline*}
 \Var  E_{f_L}(r)= 
 \int\limits_{S(0,r)} \int\limits_{S(0,r)} \sum_{j, k=1}^n\frac{\partial^2 \rho_l}{\partial \bar z_j \partial\bar w_k}(z,w)\; 
 \frac i{2\pi} d\bar z_j\wedge \omega_{n-1}(z) \wedge \frac i{2\pi}d\bar w_k \wedge \omega_{n-1}(w) \ .
\end{multline*}
Replacing $\gamma_j(z)$ by
\[
 \Gamma_j(z)=\frac{i}{2\pi} d\bar z_j \wedge\omega_{n-1}(z)
\]
in the calculations above we get the ``invariant'' version of \eqref{reduction}:
\[
\Var I_{f_L}(r)=  L^2 (1-r^2)^{2(L-1)} r^2\sum_{m=1}^4 \mathcal I^I_m(L,r)\ ,
\]
where the integrals $\mathcal I^I_m(L,r)$ are obtained from $\mathcal I_m(L,r)$ replacing the $\gamma_j$ by $\Gamma_j$.

As before, change now $\alpha=\mathcal U^{-1}(\xi)\in S$  and parametrise $\alpha$ with the same coordinates coordinates $w=(w_1,\dots,w_{n-1})\in\B_{n-1}$, $\psi\in[0,2\pi)$. We will be done as soon as we prove the following lemma.

\begin{lemma} For $r<1$ and $j=1,\dots,n$
\[
 \Gamma_j(r\alpha)=\frac{\gamma_j(r\alpha)}{(1-r^2)^{n-1}}\ .
\]
\end{lemma}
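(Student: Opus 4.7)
The strategy is to express the invariant form $\omega$ as a sum of two pieces, one a multiple of the Euclidean form $\beta$ and one that vanishes when restricted to the sphere $S(0,r)$, and then propagate this decomposition through the construction of $\Gamma_j$. From the explicit formula for $\omega$ given in Section~1, the diagonal and off-diagonal parts separate as
\[
 \omega = \frac{\beta}{1-|z|^2} + \frac{1}{(1-|z|^2)^2}\,\eta, \qquad \eta := \frac{i}{2\pi}\,\partial|z|^2\wedge\bar\partial|z|^2.
\]
The key observation is that $\eta$ vanishes after pullback to $S(0,r)$: on the sphere $d|z|^2=\partial|z|^2+\bar\partial|z|^2=0$, so $\partial|z|^2=-\bar\partial|z|^2$ there, and hence $\partial|z|^2\wedge\bar\partial|z|^2 = -\bar\partial|z|^2\wedge\bar\partial|z|^2 = 0$ on $S(0,r)$, the last equality holding identically since $\bar\partial|z|^2\wedge\bar\partial|z|^2 = \sum_{l,m}z_l z_m\, d\bar z_l\wedge d\bar z_m$ is antisymmetric in $(l,m)$.

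Consequently, as forms pulled back to $S(0,r)$ one has the simple identity $\omega = \beta/(1-r^2)$, and raising to the $(n-1)$-th exterior power gives
\[
 \omega_{n-1} = \frac{\omega^{n-1}}{(n-1)!} = \frac{\beta_{n-1}}{(1-r^2)^{n-1}} \qquad\text{on } S(0,r).
\]
Wedging on the left with $\frac{i}{2\pi}\,d\bar z_j$ yields
\[
 \Gamma_j = \frac{i}{2\pi}\,d\bar z_j \wedge \omega_{n-1} = \frac{1}{(1-r^2)^{n-1}}\,\frac{i}{2\pi}\,d\bar z_j\wedge\beta_{n-1} = \frac{\gamma_j}{(1-r^2)^{n-1}}
\]
on $S(0,r)$, which is exactly the claim.

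The main (minor) subtlety is interpretational: the displayed equality in the lemma must be read as one of forms after pullback to $S(0,r)$ rather than literally on all of $\mathbb{C}^n$. Indeed, at a point $r\alpha\in S(0,r)$ the piece $\eta/(1-r^2)^2$ is nonzero as a $(1,1)$-form on $\mathbb{C}^n$; however $\eta = \frac{i}{2\pi}\,d|z|^2\wedge\bar\partial|z|^2$ (using $d|z|^2=\partial|z|^2+\bar\partial|z|^2$ and $(\bar\partial|z|^2)^2=0$), so $\eta$ lies in the ideal generated by $d|z|^2$ and its restriction to the sphere vanishes. This is all that is required, since both $\Gamma_j$ and $\gamma_j$ have bidegree $(n-1,n)$ (i.e.\ top degree on the real $(2n-1)$-dimensional sphere) and enter the computation of $\mathcal{I}^{I}_m(L,r)$ only through integration over $S(0,r)$; so the above identity immediately transfers the calculations of part~(a) to the invariant setting, replacing every $\gamma_j$ by $\Gamma_j$ and producing an overall factor $(1-r^2)^{-(2n-2)}$ in front of $\Var E_{f_L}(r)$, giving part~(b).
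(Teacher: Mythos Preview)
Your proof is correct and takes a genuinely different route from the paper's. The paper proceeds by direct computation: it first expands $\omega_{n-1}(z)$ explicitly as a sum of wedge products, deduces the formula
\[
 \Gamma_m(r\alpha)=\frac{1}{(1-r^2)^n}\Bigl[(1-r^2|\alpha_m|^2)\gamma_m(r\alpha)-r^2\sum_{j\neq m}\bar\alpha_m\alpha_j\gamma_j(r\alpha)\Bigr],
\]
and then plugs in the specific parametrisation $(w,\psi)$ of Lemma~\ref{change} to simplify each $\Gamma_m(r\alpha)$ coordinate by coordinate (separately for $m<n$ and $m=n$) until it collapses to $\gamma_m(r\alpha)/(1-r^2)^{n-1}$. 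Your argument is conceptual: the decomposition $\omega=\beta/(1-|z|^2)+\eta/(1-|z|^2)^2$ with $\eta=\frac{i}{2\pi}\,\partial|z|^2\wedge\bar\partial|z|^2=\frac{i}{2\pi}\,d|z|^2\wedge\bar\partial|z|^2$ shows immediately that $\eta$ lies in the ideal generated by $d|z|^2$, so its pullback to $S(0,r)$ vanishes; since moreover $\eta^2=0$, only the $\beta^{n-1}$ term survives in $\omega_{n-1}$ after pullback, giving the factor $(1-r^2)^{-(n-1)}$ at once. This is shorter, coordinate-free, and makes the geometric reason for the identity transparent. The paper's approach, by contrast, is entirely mechanical and sidesteps the (mild) interpretational point you flag about reading the equality as one of forms restricted to the sphere---a point that is implicit but not articulated in the paper.
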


\begin{proof}
 From the definition we have
 \begin{multline*}
  \omega_{n-1}(z)=\frac 1{(1-|z|^2)^n} \Bigl\{ \sum_{k=1}^n (1-|z_k|^2)\bigwedge_{j\neq k}\frac i{2\pi} dz_j\wedge d\bar z_j +\Bigr. \\
  \Bigl. +\sum_{\stackrel{j,k=1}{j\neq k}}^n \bar z_j z_k \frac i{2\pi} dz_j\wedge d\bar z_k\wedge\bigwedge_{l\neq j,k} \frac i{2\pi} dz_l \wedge d\bar z_l \Bigr\} \ 
 \end{multline*}
Therefore
\[
 \Gamma_m(z)=\frac i{2\pi}d\bar z_k\wedge\omega_{n-1}(z)= \frac 1{(1-|z|^2)^n}\bigl[(1-|z_m|^2) \gamma_m(z)-\sum_{j\neq m} \bar z_m z_j \gamma_j(z)\bigr]\ ,
\]
and in particular
\[
 \Gamma_m(r\alpha) =\frac 1{(1-r^2)^n}\bigl[(1-r^2|\alpha_m|^2) \gamma_m(r\alpha)-r^2\sum_{j\neq m} \bar \alpha_m \alpha_j \gamma_j(r\alpha)\bigr]\ .
\]
For $m<n$ we have then
\begin{align*}
 \Gamma_m(r\alpha)& =\frac 1{(1-r^2)^n}\bigl[(1-r^2|w_m|^2) \gamma_m(r\alpha)-r^2\sum_{j\neq m} \bar w_m w_j \gamma_j(r\alpha)
 -r^2\sqrt{1-|w|^2} e^{i\psi}\bar w_m \gamma_n(r\alpha)\bigr]\\
 &=\frac {r^{2n-1}}{(1-r^2)^n}\Bigl[(1-r^2|w_m|^2) \bar w_m \frac{d\psi}{2\pi}\wedge d\beta_{n-1}(w)-
 r^2 \sum_{j\neq m} \bar w_m |w_j|^2 \frac{d\psi}{2\pi}\wedge d\beta_{n-1}(w)- \Bigr.\\
 &\qquad\qquad\qquad\qquad\qquad\qquad-r^2 \Bigl. (1-|w|^2)  \bar w_m  \frac{d\psi}{2\pi}\wedge d\beta_{n-1}(w)\Bigr]\\
 &=\frac {r^{2n-1}}{(1-r^2)^n}\bigl[1-r^2|w_m|^2-r^2 \sum_{j\neq m} |w_j|^2-r^2(1-|w|^2) \bigr] \bar w_m  \frac{d\psi}{2\pi}\wedge d\beta_{n-1}(w)\\
 &=\frac {r^{2n-1}}{(1-r^2)^{n-1}}\bar w_m  \frac{d\psi}{2\pi}\wedge d\beta_{n-1}(w) 
\end{align*}
By Lemma~\ref{change} we have thus $\Gamma_m(r\alpha)= (1-r^2)^{1-n}\Gamma_m(r\alpha)$, $m<n$.

If $m=n$ we have similarlly
\begin{align*}
 \Gamma_m(r\alpha)& =\frac {r^{2n-1}}{(1-r^2)^n}\Bigl[(1-r^2(1-|w|^2))\sqrt{1-|w|^2} e^{-i\psi}  \frac{d\psi}{2\pi}\wedge d\beta_{n-1}(w) -\Bigr.\\
&\qquad\qquad\qquad\qquad \Bigl. -r^2\sum_{j=1}^{n-1} w_j \sqrt{1-|w|^2} e^{-i\psi} \bar w_j\frac{d\psi}{2\pi}\wedge d\beta_{n-1}(w)\Bigr]\\
&= \frac {r^{2n-1}}{(1-r^2)^{n-1}}  \sqrt{1-|w|^2} e^{-i\psi}\frac{d\psi}{2\pi}\wedge d\beta_{n-1}(w).
\end{align*}
Again by Lemma~\ref{change} we have $\Gamma_n(r\alpha)= (1-r^2)^{1-n}\Gamma_n(r\alpha)$.
\end{proof}

\section{Proof of the Theorem~\ref{th-L}}

We start with the following consequence of Theorem~\ref{integral}(a). 

\begin{lemma}\label{heart} Let $n\geq 2$. For $L>0$ and $r\in(0,1)$,
\[
 \Var E_{f_L}(r)=\frac{2   L^2  (1-r^2)^{n-2}}{\pi (n-1)! (n-2)!}\  K(L,r)\ ,
\]
where
\[
 K(L,r)=\int_{\frac{1-r^2}{1+r^2}}^1  \frac{s^{2L-n}}{1-s^{2L}} \int_0^{\alpha(s,r)} \left(2\cos\theta-2\cos \alpha(s,r)\right)^{n-2} \bigl(s+\frac 1s-2\cos\theta\bigr)  d\theta\; ds
\]
and $\alpha(s,r)=\arccos\bigl[\dfrac 12\bigl((1+r^2)s+\dfrac{1-r^2}{s}\bigr)\bigr]$.
\end{lemma}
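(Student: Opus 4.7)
The plan is to transform the integral in Theorem~\ref{integral}(a) by the Möbius-type substitution
\[
 \zeta=\frac{1-r^2}{1-r^2w}\ ,\qquad w=\frac{\zeta-(1-r^2)}{r^2\zeta}\ ,
\]
and then write $\zeta=se^{i\theta}$. This is the natural change because the denominator $|1-r^2w|^{2L}-(1-r^2)^{2L}$ becomes a simple function of $s=|\zeta|$ alone: indeed $|1-r^2w|=(1-r^2)/s$, so
\[
 |1-r^2w|^{2L}-(1-r^2)^{2L}=(1-r^2)^{2L}\,\frac{1-s^{2L}}{s^{2L}}\ .
\]

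First I would work out how each of the remaining factors transforms. A direct computation gives $1-w=(1-r^2)(1-\zeta)/(r^2\zeta)$, hence
\[
 \frac{|1-w|^2}{|1-r^2w|^2}=\frac{|1-\zeta|^2}{r^4}=\frac{s}{r^4}\bigl(s+\tfrac1s-2\cos\theta\bigr)\ ,
\]
which already explains the factor $s+1/s-2\cos\theta$ in the statement. The Jacobian is likewise elementary: $dw=-(1-r^2)\,d\zeta/(r^2\zeta^2)$, so $dm(w)=(1-r^2)^2\,s\,ds\,d\theta/(r^4s^4)$. The key step, and the one that produces the angle $\alpha(s,r)$, is the calculation of $1-|w|^2$. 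Expanding $|w|^2=|\zeta-(1-r^2)|^2/(r^4|\zeta|^2)$ and simplifying one obtains
\[
 1-|w|^2=\frac{1-r^2}{r^4s}\,\bigl[2\cos\theta-\bigl((1+r^2)s+\tfrac{1-r^2}{s}\bigr)\bigr]
 =\frac{1-r^2}{r^4s}\bigl(2\cos\theta-2\cos\alpha(s,r)\bigr)\ .
\]
Positivity of $1-|w|^2$ is exactly the condition $|\theta|<\alpha(s,r)$, and the constraint $\cos\alpha(s,r)\le 1$ forces $s\in\bigl(\frac{1-r^2}{1+r^2},1\bigr)$; these two inequalities describe the image of $\D$ under $w\mapsto\zeta$.

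With these pieces in hand, I would substitute everything into the integral of Theorem~\ref{integral}(a), track the powers of $s$, of $(1-r^2)$ and of $r$ separately, and use the $\theta\mapsto-\theta$ symmetry to fold $\int_{-\alpha}^{\alpha}$ into $2\int_0^{\alpha}$. Collecting: the powers of $s$ combine to $s^{2L-n}$, the powers of $(1-r^2)$ to $(1-r^2)^{n-2L}$ (which together with the prefactor $(1-r^2)^{2L-2}$ from Theorem~\ref{integral}(a) yields $(1-r^2)^{n-2}$), and the powers of $r$ to $r^{-4n}$ (which cancels the $r^{4n}$ prefactor). What remains is exactly $\frac{2L^2(1-r^2)^{n-2}}{\pi(n-1)!(n-2)!}\,K(L,r)$.

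The only step that requires care is the algebraic simplification of $1-|w|^2$; everything else is bookkeeping. No genuine obstacle arises, since the substitution was designed precisely to diagonalise both $|1-r^2w|$ and $1-|w|^2$ in terms of $s$ and $\theta$.
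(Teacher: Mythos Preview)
Your proposal is correct and is essentially identical to the paper's own proof: your substitution $\zeta=(1-r^2)/(1-r^2w)$ followed by $\zeta=se^{i\theta}$ is exactly the paper's change $w=\tfrac{1}{r^2}+\tfrac{1-r^2}{r^2s}e^{i(\pi-\theta)}$, just phrased as a M\"obius map, and all the resulting expressions for $1-|w|^2$, $|1-w|^2$, $|1-r^2w|$, the Jacobian, and the range of $(s,\theta)$ agree verbatim. (One harmless slip: the sign of $dw/d\zeta$ is actually positive, but since only $|dw/d\zeta|^2$ enters the area Jacobian this has no effect.)
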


\begin{proof}
 
Starting from the expression given in Theorem~\ref{integral}(a), write
\begin{equation}\label{integral-J}
 \Var E_{f_L}(r)=\frac{r^{4n} L^2 (1-r^2)^{-4}}{\pi (n-1)! (n-2)!} J(L,r)\ ,
\end{equation}
where
\begin{equation*}
 J(L,r)=\int_{\D}
 \frac {(1-|w|^2)^{n-2}}{1-\left(\frac{1-r^2}{|1-r^2w|}\right)^{2L}} \left(\frac{1-r^2}{|1-r^2 w|}\right)^{2L+2}\; |1-w|^2dm(w)
\end{equation*}

We write this integral in (a sort of) polar coordinates $s$, $\theta$: let
\[
 w=\frac 1{r^2}+\frac{1-r^2}{r^2 s} e^{i(\pi-\theta)}\ ,
\]
so that 
\[
 \frac{1-r^2}{|1-r^2 w|}=s\ .
\]
Let $p(s,r)$ denote the intersection point of the unit circle and the circle $|w-1/r^2|=\frac{1-r^2}{r^2 s}$. 
In these coordinates, $w\in\D$ if and only if $s\in (\frac{1-r^2}{1+r^2},1)$ and $\theta\in (-\alpha(s,r),\alpha(s,r))$, where $\alpha(s,r)$ is the angle between the complex numbers $-1/r^2$ and $p(s,r)-1/r^2$.

In these coordinates
\begin{equation}\label{dist}
 1-|w|^2= 1-\left|\frac 1{r^2}+\frac{1-r^2}{r^2 s} e^{i(\pi-\theta)}\right|^2=\frac{1-r^2}{r^4 s}\left[2 \cos\theta-2\cos\alpha(s,r)\right]\ ,
\end{equation}
since
\begin{equation}\label{alpha}
 2\cos\alpha(s,r)=(1+r^2)s+\frac{1-r^2}{s}\ .
\end{equation}
Also
\begin{align*}
  |1-w|^2&= \left|1-\frac 1{r^2}+\frac{1-r^2}{r^2 s} e^{i(\pi-\theta)}\right|^2=
  \frac{(1-r^2)^2}{r^4s}\left(s+\frac 1s-2 \cos\theta\right)\ .
\end{align*}

Since $dm(w)=\dfrac{(1-r^2)^2}{r^4 s^3}\ d\theta\; ds$, and letting $\D_+=\{z\in \D : \Im z >0\}$,  
\begin{align*}
 J(L,r)&=2\int_{\D^+}\frac {(1-|w|^2)^{n-2}}{1-\left(\frac{1-r^2}{|1-r^2w|}\right)^{2L}} \left(\frac{1-r^2}{|1-r^2w|}\right)^{2L+2}\; |1-w|^2dm(w)
 =\frac{2(1-r^2)^{n+2}}{r^{4n} } K(L,r)\ ,
\end{align*}
where
\[
 K(L,r)=\int_{\frac{1-r^2}{1+r^2}}^1 \int_0^{\alpha(s,r)} \frac{s^{2L-n}}{1-s^{2L}}\left(2\cos\theta-2\cos\alpha(s,r)\right)^{n-2} \left(s+\frac 1s-2\cos\theta\right)  d\theta\; ds\ .
\]
Going back to \eqref{integral-J} we obtain Lemma~\ref{heart}.
\end{proof}

Our starting point in the proof of Theorem~\ref{th-L} is the expression of $\Var E_{f_L}(r)$ given by Lemma~\ref{heart}. 

Fix $r<1$ and, in order to simplify the notation, let $\alpha(s)=\alpha(s,r)$.

1. In the first place we observe that for the asymptotics of $\Var E_{f_L}(r)$ as $L\to\infty$, only the part of the integral $K(L,r)$ corresponding to $s$ 
close to 1 is relevant. Fix $\epsilon>0$ and let us see that the part of the integral up to $s=1-\epsilon$ decays exponentially in $L$; we will see later that the part corresponding to $s\in(1-\epsilon,1)$ decays polynomially. From \eqref{dist} we have
\[
 2 \cos\theta-2\cos\alpha(s,r)\leq \frac{r^4 s}{1-r^2}
\]
and therefore, for $L$ big enough,
\begin{multline*}
 \int_{\frac{1-r^2}{1+r^2}}^{1-\epsilon}  \frac{s^{2L-n}}{1-s^{2L}} \int_0^{\alpha(s)}\left(2\cos\theta-2\cos\alpha(s,r)\right)^{n-2} \left(s+\frac 1s-2\cos\theta\right)  d\theta\; ds\\
\leq \frac{(1-\epsilon)^{2L-n}}{1-(1-\epsilon)^{2L}} \int_{\frac{1-r^2}{1+r^2}}^{1-\epsilon} \left(\frac{r^4 s}{1-r^2}\right)^{n-2} 
 \left[\left(s+\frac 1s\right)\alpha(s)-2\sin\alpha(s)\right]  d\theta\; ds\\
 \leq \frac{(1-\epsilon)^{2L-2} r^{4n-2}}{(1-r^2)^{n-2}} \int_{\frac{1-r^2}{1+r^2}}^{1} \left[\left(s+\frac 1s-2\right)\alpha(s)+2\alpha(s)\right]\; ds\\
 \leq \frac{(1-\epsilon)^{2L-2} r^{4n-2}}{(1-r^2)^{n-2}} \int_{\frac{1-r^2}{1+r^2}}^{1} \left[\left(s+\frac 1s-2\right)\frac{\pi}2 +\pi\right]\; ds =C_{n,r} (1-\epsilon)^{2L-2}\ .
 \end{multline*}

2. For $s$ near 1 we can assume that $\alpha(s)$ is small. More precisely, given $\delta>0$, there exists $\epsilon=\epsilon(\delta)>0$ such that $\lim\limits_{\delta\to 0}\epsilon(\delta)=0$ and
\[
 s\in (1-\epsilon,1)\quad\Longrightarrow\quad \alpha(s)\leq\arccos(1-\delta)=O(\sqrt\delta)\ .
\]
To see this notice that, by \eqref{alpha}, $\alpha(s)\leq\arccos(1-\delta)$ precisely when
$(1+r^2)s+\frac{1-r^2}s\geq  2 (1-\delta)$, which is equivalent to
$s^2-\frac{2(1-\delta)}{1+r^2}\; s+\frac{1-r^2}{1+r^2}\geq 0$, and to
\[
 s\notin\left(\frac 1{(1+r^2)}(1-\delta-\sqrt{(1-\delta)^2-(1-r^4)}), \frac 1{(1+r^2)}(1-\delta+\sqrt{(1-\delta)^2-(1-r^4)})\right)\ .
\]
Hence it is enough to take
\[
 \epsilon=1-\frac 1{(1+r^2)}(1-\delta+\sqrt{(1-\delta)^2-(1-r^4)})\ .
\]

3. For $\alpha(s)$ close to 0,
 \[
  \alpha^2(s)=(1+r^2)\; \frac{1-s}s\;  \left(s-\frac{1-r^2}{1+r^2}\right)+\cdots
 \]
(here and in the remaining of this section, $+\cdots$ will indicate terms of lower order as $s\to 1^{-}$.)

 To see this notice that, from \eqref{alpha},

 \begin{align}\label{2-2cos}
  2-2\cos\alpha(s)&=-\frac{(1-s)^2}s+\frac{r^2}s-sr^2
  =(1+r^2)\frac{1-s}s \left(s-\frac{1-r^2}{1+r^2}\right)\ ,
 \end{align}
 which together with the approximation $  2-2\cos\alpha(s)=\alpha^2(s)+o(\alpha^2(s))$ gives the statement.

4. We will see next that the whole integral, with $s$ from $\frac{1-r^2}{1+r^2}$ up to 1, has polynomial decay.
As seen in the previous point, for $s$ close to 1 we have $\alpha^2(s)=2r^2  (1-s)+\cdots$
On the other hand, by \eqref{alpha}
\begin{align*}
 2\cos\theta -2\cos\alpha(s)&=-2(1-\cos\theta)+2-s-\frac 1s+r^2(\frac 1s-s)\\
 &=2r^2(1-s) -2(1-\cos\theta)-(1-r^2)\sum_{j=2}^\infty (1-s)^j\ ,
\end{align*}
and
\begin{align*}
 s+\frac 1s-\cos\theta=s+\frac 1s-2+2(1-\cos\theta)=2(1-\cos\theta)+\sum_{j=2}^\infty (1-s)^j\ .
\end{align*}

Together with the approximation $2(1-\cos\theta)=\theta^2+\cdots$, and writing only the leading term around $s=1$, this yields
\begin{align*}
 A(s):&=\int_0^{\alpha(s)}\left(2\cos\theta-2\cos\alpha(s)\right)^{n-2} \left(s+\frac 1s-2\cos\theta\right)  d\theta\\
 &=\int_{0}^{\sqrt {2r^2 (1-t)}} \bigl(2r^2(1-s) -\theta^2\bigr)^{n-2}\; \theta^2\; d\theta+\cdots\\
 &=\sum_{j=0}^{n-2}\binom{n-2}{j}\bigl(2 r^2(1-s)\bigr)^{n-2-j} (-1)^j\int_{0}^{\sqrt {2 r^2(1-t)}} \theta^{2j+2} d\theta+\cdots\\
 &=\bigl(2 r^2(1-s)\bigr)^{n-1/2} \sum_{j=0}^{n-2} \binom{n-2}{j} \frac{(-1)^j}{2j+3}+\cdots
\end{align*}

A straightforward computation shows that for $m\in\N$ and $z\in\C\setminus\{0,1,\dots,m\}$,
\begin{equation}\label{Cm} 
\sum_{j=0}^{m} \binom{m}{j} \frac{(-1)^j}{z+j}=\frac{m!}{z(z+1)\cdots(z+m)}=\frac{m!\Gamma(z)}{\Gamma(z+m+1)}\ .
\end{equation}

Applying this to $z=3/2$, $m=n-2$, and using that $\Gamma(1/2)=\sqrt{\pi}$ we have
\[
 A(s)= \frac{(n-2)!\sqrt{\pi}}{4\Gamma(n+1/2)}\bigl(2 r^2(1-s)\bigr)^{n-1/2}\ .
\]

Since for $L$ big the integral for $s\in(0,\frac{1-r^2}{1+r^2})$ tends to 0, this implies that
\begin{align*}
 K(L,r)&=\frac{(n-2)!\sqrt{\pi}}{4\Gamma(n+1/2)}\int_{\frac{1-r^2}{1+r^2}}^{1}  \frac{s^{2L-n}}{1-s^{2L}}  \bigl(2 r^2(1-s)\bigr)^{n-1/2} ds+\cdots\\
 &=\sqrt{\frac{\pi}2} \frac{(n-2)! 2^{n-2}}{\Gamma(n+1/2)} r^{2n-1}\sum_{k=0}^\infty\int_{0}^{1} s^{2L+2LK-n} (1-s)^{n-1/2}ds +\cdots\\
&=\sqrt{\frac{\pi}2} \frac{(n-2)! 2^{n-2}}{\Gamma(n+1/2)}  r^{2n-1}\sum_{k=0}^\infty \frac{\Gamma(2L+2Lk-n+1)\Gamma(n+1/2)}{\Gamma(2L+2Lk+3/2)}+\cdots
\end{align*}
Using the asymptotics $\lim\limits_{k\to\infty}\frac{\Gamma(k+a)}{\Gamma(k) k^a}=1$  we have then
\begin{align*}
 K(L,r)&=\sqrt{\frac{\pi}2} \frac{(n-2)! 2^{n-2}}{\Gamma(n+1/2)}  r^{2n-1}\Gamma(n+1/2) \sum_{k=0}^\infty\frac 1{(2L+2kL)^{n+1/2}}+\cdots\\
  &=\frac{\sqrt{\pi}}8 (n-2)!\zeta(n+1/2) r^{2n-1} \frac 1{L^{n+1/2}}+\cdots
 \end{align*} 
Finally, by Lemma~\ref{heart} we get 
\begin{align*}
 \Var E_{f_L}(r)&=\frac{2L^2 (1-r^2)^{n-2}}{\pi (n-1)! (n-2)!}  \frac{\sqrt{\pi}}8 (n-2)! \zeta\bigl(n+  1/2\bigr) r^{2n-1} \frac 1{L^{n+1/2}}+\cdots\\
 &=\frac 1{4\sqrt{\pi}}\frac{\zeta(n+1/2)}{(n-1)!} r^{2n-1} (1-r^2)^{n-2} L^{3/2-n}+\cdots
\end{align*}
This and Theorem~\ref{integral}(b) finish the proof.

\section{Proof of the Theorem~\ref{th-r}}

Let us see first that the order of growth as $r\to 1^{-}$ is as stated. Later on we will see how the constants $C(L,n)$ can be determined.
The notation $\simeq$ indicates that there exists a constant $C$ independent of $r$ such that $C^{-1}A\leq B\leq CA$. 

According to Lemma~\ref{heart} it is enough to study the asymptotics of $K(L,r)$ as $r\to 1^{-}$. 

Since $ 2\cos x-2\cos a \simeq (\sin a) (a-x)$ for $0\leq x\leq a\leq \pi/2$, 
we see that in the range of integration of $\theta$ in $K(L,r)$
\[
 2\cos\theta-2\cos\alpha(s,r)\simeq (\sin \alpha(s,r)) (\alpha(s,r)-\theta)\simeq \alpha(s,r) (\alpha(s,r)-\theta)\ .
\]
Therefore
\[
 K(L,r)\simeq \int_{\frac{1-r^2}{1+r^2}}^1 \frac{s^{2L-n}}{1-s^{2L}} (\alpha(s,r))^{n-2}\int_0^{\alpha(s,r)} (\alpha(s,r)-\theta)^{n-2}
 \left[\frac{(1-s)^2}{s}+2(1-\cos\theta)\right]d\theta\; ds
\]
Denote temporarily $\alpha=\alpha(s,r)$. Using now that $1-\cos\theta\simeq \theta^2$ for $\theta\in[0,\pi/2]$ 
we can estimate the integral in $\theta$:
\begin{multline*}
 \int_0^\alpha (\alpha-\theta)^{n-2}\left[\frac{(1-s)^2}{s}+2(1-\cos\theta)\right]d\theta \simeq\\
 \simeq\frac{(1-s)^2}{s}\int_0^\alpha (\alpha-\theta)^{n-2} d\theta +\int_0^\alpha (\alpha-\theta)^{n-2} \theta^2 d\theta\\
 \simeq \frac{(1-s)^2}{s}\int_0^\alpha (\alpha-\theta)^{n-2} d\theta+ \int_0^\alpha (\alpha-\theta)^{n} d\theta+2\alpha\int_0^\alpha (\alpha-\theta)^{n-1} d\theta+\alpha^2 \int_0^\alpha (\alpha-\theta)^{n-2} d\theta\\
 \simeq \frac{(1-s)^2}{s} \alpha^{n-1} + \alpha^{n+1}=\alpha^{n-1}\left[\frac{(1-s)^2}{s}+\alpha^2\right]\ .
\end{multline*}
Hence
\[
 K(L,r)\simeq \int_{\frac{1-r^2}{1+r^2}}^1 \frac{s^{2L-n}}{1-s^{2L}} (\alpha(s,r))^{2n-3}\left[\frac{(1-s)^2}{s}+\alpha^2(s,r)\right]\; ds\ .
\]
Using \eqref{2-2cos} we have also,
\[
 \alpha(s,r)^2\simeq 2-2\cos\alpha(s,r)=(1+r^2)\frac{1-s}{s}\left(s-\frac{1-r^2}{1+r^2}\right)\ ,
\]
and therefore
\begin{align*}
 K(L,r)&\simeq\int_{\frac{1-r^2}{1+r^2}}^1 \frac{s^{2L-n}}{1-s^{2L}}\left(\frac{1-s}{s}\bigl(s-\frac{1-r^2}{1+r^2}\bigr)\right)^{n-3/2}\left[\frac{(1-s)^2}{s}+\frac{1-s}s\bigl(s-\frac{1-r^2}{1+r^2}\bigr)\right]\; ds\\
 &\simeq \int_{\frac{1-r^2}{1+r^2}}^1 \frac{s^{2L-2n+1/2}}{1-s^{2L}} (1-s)^{n-1/2} \bigl(s-\frac{1-r^2}{1+r^2}\bigr)^{n-3/2}\bigl((1-s)+(s-\frac{1-r^2}{1+r^2})\bigr) ds \\
 &\simeq \int_{\frac{1-r^2}{1+r^2}}^1 \frac{s^{2L-2n+1/2}}{1-s^{2L}} (1-s)^{n-1/2} \bigl(s-\frac{1-r^2}{1+r^2}\bigr)^{n-3/2} ds\ .
\end{align*}
Let us see now that for the asymptotics as $r\to 1^{-}$ it is enough to take care of $s$ small. Notice, for instance, that the portion of the integral where $s\in(1/2,1)$ tends to the constant
\[
 \int_{1/2}^1 \frac{s^{2L-2n+1/2}}{1-s^{2L}} (1-s)^{n-1/2} s^{n-3/2} ds=\int_{1/2}^1 \frac{s^{2L-n-1}}{1-s^{2L}} (1-s)^{n-1/2}  ds\ .
\]
On the other hand, for $s\leq 1/2$ we have $1-s\simeq 1-s^{2L}\simeq 1$ and therefore,
\begin{multline*}
 \int_{\frac{1-r^2}{1+r^2}}^{1/2} \frac{s^{2L-2n+1/2}}{1-s^{2L}} (1-s)^{n-1/2} \bigl(s-\frac{1-r^2}{1+r^2}\bigr)^{n-3/2} ds\simeq
  \int_{\frac{1-r^2}{1+r^2}}^{1/2} s^{2L-2n+1/2}\bigl(s-\frac{1-r^2}{1+r^2}\bigr)^{n-3/2} ds\\
  \simeq
 \begin{cases}
  (1-r^2)^{2L-n}+\cdots\; &\textrm{if $2L-n<0$}\\
  \log\dfrac 1{1-r^2}+\cdots \; &\textrm{if $2L-n=0$}\\
  \quad 1\; &\textrm{if $2L-n>0$}.
 \end{cases}
 \qquad\qquad
\end{multline*}

\medskip

This gives the order of growth of $\Var E_{f_L}(r)$ as stated in Theorem~\ref{th-r}. Once we know this we can determine the values $C(L,n)$, $L>0$.

\underline{Case $L\leq n/2$}. Here $K(L,r)$ tends to $\infty$ as $r\to 1^{-}$, at speed $(1-r^2)^{2L-n}$, so it is enough to consider the terms giving  this maximal order of growth. Since
\[
 s+\frac 1s-2\cos\theta=\frac{(1-s)^2}s+2(1-\cos\theta)=\frac 1s+\cdots
\]
and
\[
\dfrac 1{1-s^{2L}}=1+\cdots 
\]
(where the dots indicate lower order terms) we have 
\begin{align*}
 K(L,r)&=\int_{\frac{1-r^2}{1+r^2}}^{1} s^{2L-n-1}\int_0^{\alpha(s,r)} (2\cos\theta-2\cos\alpha(s,r))^{n-2}d\theta\; ds+\cdots\\
 &=\int_{\frac{1-r^2}{1+r^2}}^{1} \int_0^{\alpha(s,r)} s^{2L-n-1} \sum_{j=0}^{n-2} \binom{n-2} {j} (-1)^j (2\cos\alpha(s,r))^j (2\cos\theta)^{n-2-j}d\theta\; ds+\cdots
\end{align*}

In order to apply Fubini's theorem --and to simplify the notation-- denote $\epsilon(r)=\frac{1-r^2}{1+r^2}$. The domain of the double integral above is thus given by the conditions
\[
 \begin{cases}
  \epsilon(r)\leq s\leq 1 \\
  0\leq\theta\leq \alpha(s,r)\ .
 \end{cases}
\]
To determine the global range of $\theta$, notice that the function
\[
 h(s):=2\cos \alpha(s,r)=(1+r^2)s+\frac{1-r^2}s
\]
has a minimum at $s=\sqrt{\epsilon(r)}$ and that $h(\sqrt{\epsilon(r)})=2\sqrt{1-r^4}$. Therefore
\[
 0\leq\theta\leq\alpha_0(r):=\arccos(\sqrt{1-r^2})\ .
\]
Once $\theta$ is fixed, the inequalities above determine the range of $s$:
\[
s\in A(\theta):=\{s : \epsilon(r)\leq s\leq 1\; ,\; 2\cos\theta\geq (1+r^2)s+\frac{1-r^2}s\}\ .
\]
Therefore
\begin{equation}\label{preJ}
 K(L,r)=\sum_{j=0}^{n-2} \binom{n-2}{j} (-1)^j \int_0^{\alpha_0(r)} (2\cos\theta)^{n-2-j}
 \left[\int_{A(\theta)}(2\cos\alpha(s,r))^j ds\right]\, d\theta+\cdots
\end{equation}
Here we wish to determine the leading term of
\[
 J:=\int_{A(\theta)}(2\cos\alpha(s,r))^j ds=\int_{A(\theta)} \bigl((1+r^2)s+\frac{1-r^2}s\bigr)^j ds\ .
\]
After the change of variable $s=\sqrt{\epsilon(r)} x$ the restrictions on $s$ imposed by $A(\theta)$ are equivalent to the conditions
\[
\sqrt{\epsilon(r)} \leq x\leq 1/\sqrt{\epsilon(r)} \quad , \quad (x+1/x)\sqrt{1-r^4}\leq 2\cos\theta\ ,
\]
hence
\begin{align}\label{J}
 J&=(\epsilon(r))^{L-\frac n2} (1-r^4)^{j/2}
 \int\limits_{\stackrel{\sqrt{\epsilon(r)} \leq x\leq 1/\sqrt{\epsilon(r)}}{x+1/x\leq \frac{2\cos\theta}{\sqrt{1-r^4}}}}
 x^{2L-n-1}(x+\frac 1x)^j dx+\cdots \\
 &=\frac{(1-r^2)^{L-\frac n2+\frac j2}}{2^{L-\frac n2-\frac j2}}
  \int\limits_{\stackrel{\sqrt{\epsilon(r)} \leq x\leq 1/\sqrt{\epsilon(r)}}{x+1/x\leq \frac{2\cos\theta}{\sqrt{1-r^4}}}}
 x^{2L-n-1}(x+\frac 1x)^j dx+\cdots\nonumber
\end{align}
We split the integral above into two parts, depending on whether or not $x\leq 1$. For both parts we perform the same change of variables $y=x+1/x$ and extract the terms of maximal order.

(i) Assume $x\leq 1$. Here $y=1/x+\cdots$ and therefore $x^{2L-n-1}=y^{n+1-2L}+\cdots$. Similarly 
\[
dy=(1-\frac 1{x^2})dx=-\frac{dx}{x^2}+\cdots=-y^2 dx+\cdots
\]
As for the limits of integration, notice that if $x=\sqrt{\epsilon(r)} $ then
$y=\sqrt{\epsilon(r)} +1/\sqrt{\epsilon(r)} =\frac 2{\sqrt{1-r^4}}$. This yields
\begin{align}\label{A1}
 A_1:&=\int\limits_{\stackrel{\sqrt{\epsilon(r)} \leq x\leq 1}{x+1/x\leq \frac{2\cos\theta}{\sqrt{1-r^4}}}}
 x^{2L-n-1}(x+\frac 1x)^j dx
 =\int_2^{\frac{2\cos\theta}{\sqrt{1-r^4}}} y^{n+1-2L} y^j\frac{dy}{y^2}+\dots\\
 &=\frac{1}{n-2L+j}\left(\frac{2\cos\theta}{\sqrt{1-r^4}}\right)^{n-2L+j}+\cdots
 =\frac{2^{\frac n2-L+\frac j2} (\cos\theta)^{n-2L+j}}{(n-2L+j) (1-r^2)^{\frac n2-L+\frac j2}}+\cdots \nonumber
\end{align}

(ii) Let us see now that the integral corresponding to $x\geq 1$ is of smaller order. Here $y=x+\cdots$ and $dy=dx+\cdots$, thus
\begin{align*}
 A_2:&=\int\limits_{\stackrel{1 \leq x\leq 1/\sqrt{\epsilon(r)}}{x+1/x\leq \frac{2\cos\theta}{\sqrt{1-r^4}}}}
 x^{2L-n-1}(x+\frac 1x)^j dx
 =\int_2^{\frac{2\cos\theta}{\sqrt{1-r^4}}} y^{2L-n-1+j} dy+\cdots
\end{align*}
If $2L-n+j<0$ these integrals tend to a constant. If $2L-n+j=0$ then this is O$(\log(\frac 1{1-r^2}))$, which is obviously of smaller order than 
$(1-r^2)^{L-\frac n2-\frac j2}=(1-r^2)^{-j}$. Finally, if $2L-n+j>0$ then
\[
 A_2=\frac{1}{2L-n+j} \left(\frac{2\cos\theta}{\sqrt{1-r^4}}\right)^{2L-n+j}= o\bigl(\frac 1{(1-r^2)^{\frac n2-L+\frac j2}}\bigr)\ .
\]

Therefore the leading term of the integral in \eqref{J} is $A_1$, and by \eqref{A1} we get
\begin{align*}
 J&=\frac{(1-r^2)^{L-\frac n2+\frac j2}}{2^{L-\frac n2-\frac j2}} \frac{2^{\frac n2-L+\frac j2} (\cos\theta)^{n-2L+j}}{(n-2L+j) (1-r^2)^{\frac n2-L+\frac j2}}+\cdots\\
 &=\frac{2^{n-2L+j}}{n-2L+j} (\cos\theta)^{n-2L+j} (1-r^2)^{2L-n}+\cdots
\end{align*}

Plugging this into \eqref{preJ}  we obtain
\begin{align*}
 K(L,r)&=2^{2n-2L-2}\left[\sum_{j=0}^{n-2} \binom{n-2}{j}\frac{(-1)^j}{n-2L+j}\right]\left[\int_0^{\pi/2} (\cos\theta)^{2n-2L-2} d\theta\right] (1-r^2)^{2L-n}+\cdots 
\end{align*}

The sum in $j$ is computed using \eqref{Cm} with $z=n-2L$ and $m=n-2$. The integrals in $\theta$ are taken care of by the following identity, which is a simple computation:
for $m\in\mathbb N$
\begin{equation}\label{cosk}
 \int_0^{\pi/2}(\cos\theta)^m d\theta=\frac{\sqrt\pi}2\frac{\Gamma(\frac{m+1}2)}{\Gamma(\frac m2+1)}\ .
\end{equation}

We have thus 
\[
 K(L,r)=2^{2n-2L-2}\frac{(n-2)!\Gamma(n-2L)}{\Gamma(2n-2L-1)} \frac{\sqrt{\pi}}2 \frac{\Gamma(n-L-1/2)}{\Gamma(n-L)}(1-r^2)^{2L-n}+\cdots
\]
and therefore
\[
 \Var E_{f_L}(r)=\frac{L^2}{\sqrt{\pi}}\frac{2^{2n-2L-2}}{(n-1)!} \frac{\Gamma(n-L-1/2)}{\Gamma(2n-2L-1)}\frac{\Gamma(n-2L)}{\Gamma(n-L)}(1-r^2)^{2L-n}+\cdots
\]
This expression can be simplified by means of the duplication formula for the $\Gamma$-function:
\begin{equation}\label{duplication}
\Gamma(2z)= \frac{2^{2z-1}}{\sqrt{\pi}}\Gamma(z)\Gamma(z+1)\ .
\end{equation}
Taking $z=n-L-1/2$ we see that
\[
 \frac{2^{2n-2L-2}}{\sqrt{\pi}}\frac{\Gamma(n-L-1/2)}{\Gamma(2n-2L-1)}=\frac 1{\Gamma(n-L)}
\]
and therefore
 \[
 \Var E_{f_L}(r)=\frac{L^2}{(n-1)!} \frac{\Gamma(n-2L)}{(\Gamma(n-L)^2}(1-r^2)^{2L-n}+\cdots
\]
Applying once more the duplication formula, now with $z=n/2-L$, we finally get
 \[
 \Var E_{f_L}(r)=\frac{L^2}{(n-1)!} \frac{2^{n-2L-1}}{\sqrt{\pi}}\frac{\Gamma(\frac n2-L) \Gamma(\frac{n+1}2-L) }{(\Gamma(n-L))^2}(1-r^2)^{2L-n}+\cdots
\]

\underline{Case $L=n/2$}. Proceeding as in the previous case we arrive at
\[
 K(L,r)=\sum_{j=0}^{n-2} \ \binom{n-2}{j} (-1)^j \int_0^{\alpha_0(r)} (2\cos \theta)^{n-2-j}\left[\int_{A(\theta)}\frac 1s (2\cos\alpha(s,r))^j ds\right]\; d\theta+\cdots.
\]
where
\[
 J:=\int_{A(\theta)}\frac 1s (2\cos\alpha(s,r))^j ds=2^{j/2}(1-r^2)^{j/2} \int\limits_{\stackrel{\sqrt{\epsilon(r)} \leq x\leq 1/\sqrt{\epsilon(r)}}{x+1/x\leq \frac{2\cos\theta}{\sqrt{1-r^4}}}}
 \frac 1x(x+\frac 1x)^j dx+\cdots
\]
As before we split the integral into two parts, depending on whether $x\leq 1$ or not. Unlike the previous case here both parts have the same order.

(i) Assume $x\leq 1$. Then
\begin{align*}\label{A1}
 A_1:&=\int\limits_{\stackrel{\sqrt{\epsilon(r)} \leq x\leq 1}{x+1/x\leq \frac{2\cos\theta}{\sqrt{1-r^4}}}}
\frac 1x(x+\frac 1x)^j dx =\int_2^{\frac{2\cos\theta}{\sqrt{1-r^4}}} y^{j-1} dy +\cdots\\
 &=
 \begin{cases}
    \log\bigl(\dfrac{2\cos\theta}{\sqrt{1-r^4}}\bigr)+\cdots =\dfrac 12\log\bigl(\dfrac 1{1-r^2}\bigr)+\log(\cos\theta)\cdots \quad &\textrm{if $j=0$} \\
    \dfrac 1j\left(\dfrac{2\cos\theta}{\sqrt{1-r^4}}\right)^j+\cdots=\dfrac {2^{j/2}(\cos\theta)^j}{j(1-r^2)^{j/2}}+\cdots \quad &\textrm{if $j\geq 1$}\ .
   \end{cases}
\end{align*}

(ii) For $x>1$ we have the same values as in the previous case:
\[
 A_2:=\int\limits_{\stackrel{1 < x\leq 1/\sqrt{\epsilon(r)}}{x+1/x\leq \frac{2\cos\theta}{\sqrt{1-r^4}}}}
\frac 1x(x+\frac 1x)^j dx =\int_2^{\frac{2\cos\theta}{\sqrt{1-r^4}}} y^{j-1} dy+\cdots 
\]

Then
\begin{align*}
 J&=
 \begin{cases}
  \log\bigl(\dfrac{1}{1-r^2}\bigr)+2\log(\cos\theta))+\cdots &\textrm{if $j=0$} \\
  \dfrac{2^{j+1}}{j} (\cos\theta)^j+\cdots \quad &\textrm{if $j\geq 1$}\ .
 \end{cases}
 \end{align*}

 With this we get
 \begin{multline*}
  K(L,r)=\int_0^{\alpha_0(r)} (2\cos\theta)^{n-2}\left(  \log\bigl(\dfrac{1}{1-r^2}\bigr)+2\log(\cos\theta))\right)\; d\theta+\\
  +\sum_{j=1}^{n-2} \ \binom{n-2}{j} (-1)^j  \int_0^{\alpha_0(r)} (2\cos\theta)^{n-2-j} \dfrac{2^{j+1}}{j} (\cos\theta)^j d\theta\ .
 \end{multline*}

 Since $ (\cos\theta)^{n-2}\log(\cos\theta)$ is integrable in $[0,\pi/2]$ and the sum in $j\geq 1$ is bounded independently of $r$, the leading term of $K(L,r)$ is given by the factor $\log(1/(1-r^2))$ above. Then \eqref{cosk} yields
 \begin{align*}
  K(L,r)&= 2^{n-2} \int_0^{\alpha_0(r)} (\cos\theta)^{n-2}\log\bigl(\dfrac{1}{1-r^2}\bigr)+\cdots
  = 2^{n-2} \int_0^{\pi/2} (\cos\theta)^{n-2}\log\bigl(\dfrac{1}{1-r^2}\bigr)+\cdots \\
  &=2^{n-3}\sqrt{\pi}\; \frac{\Gamma(\frac{n-1}2)}{\Gamma(\frac n2)} \log\bigl(\dfrac{1}{1-r^2}\bigr)+\cdots
 \end{align*}
By Lemma~\ref{heart} we get then
\[
 \Var E_{f_L}(r)=\frac{2^{n-2}(\frac n2)^2}{\sqrt{\pi} (n-1)! (n-2)!} \frac{\Gamma(\frac{n-1}2)}{\Gamma(\frac n2)} (1-r^2)^{n-2} \log\bigl(\dfrac{1}{1-r^2}\bigr)+\cdots
\]
The duplication formula \eqref{duplication} with $z=\frac{n-1}2$ yields
$\frac{2^{n-2}\Gamma(\frac{n-1}2)}{\sqrt{\pi} (n-2)!}=\frac 1{\Gamma(\frac n2)}$
and thus the stated result.

\underline{Case $L>n/2$}. Here $K(L,r)$ tends, as $r\to 1^{-}$, to the constant
\[
 K(L,1)=\int_0^1\frac{s^{2L-n}}{1-s^{2L}}\int_0^{\arccos s} (2\cos\theta-2s)^{n-2} (s+\frac 1s-2\cos\theta)d\theta\; ds\ .
\]

As before, we expand the power and apply Fubini's theorem:
\begin{align*}
 K(L,1)&=2^{n-2}\int_0^1\int_0^{\arccos s} \sum_{k=0}^\infty s^{2L+2Lk-n} \sum_{j=0}^{n-2} \ \binom{n-2}{j} (-1)^j s^j (\cos\theta)^{n-2-j} (s+\frac 1s-2\cos\theta)d\theta\; ds\\
 &= 2^{n-2} \sum_{k=0}^\infty \sum_{j=0}^{n-2} \ \binom{n-2}{j} (-1)^j\int_0^{\pi/2} (\cos\theta)^{n-2-j}\int_0^{\cos\theta} s^{2L(1+k)-n+j}(s+\frac 1s-2\cos\theta)\; dsd\theta.
\end{align*}
Re-indexing the sum in $k$ and performing the integral in $s$ we get
\begin{multline*}
 K(L,1)=2^{n-2} \sum_{k=1}^\infty \sum_{j=0}^{n-2} \ \binom{n-2}{j} (-1)^j\left[ \int_0^{\pi/2}\frac{(\cos\theta)^{2Lk}}{2Lk-n+j+2}\; d\theta +
 \int_0^{\pi/2}\frac{(\cos\theta)^{2Lk-2}}{2Lk-n+j}\; d\theta \right. \\
 \qquad\qquad\qquad\qquad \left. -2\int_0^{\pi/2}\frac{(\cos\theta)^{2Lk}}{2Lk-n+j+1}\; d\theta \right]
\end{multline*}
Using \eqref{cosk} to compute the integrals in $\theta$, and \eqref{Cm} to compute the sum in $j$, we obtain 
\begin{align*}
 K(L,1)&=\sqrt{\pi}(n-2)!2^{n-3} \sum_{k=1}^\infty \left[ \frac{\Gamma(Lk+1/2)}{\Gamma(Lk+1)} \frac{\Gamma(2Lk-n+2)}{\Gamma(2Lk+1)} +
 \frac{\Gamma(Lk-1/2)}{\Gamma(Lk)} \frac{\Gamma(2Lk-n)}{\Gamma(2Lk-1)} \right. \\
& \left. -2 \frac{\Gamma(Lk+1/2)}{\Gamma(Lk+1)} \frac{\Gamma(2Lk-n+)}{\Gamma(2Lk)}\right]\ .
\end{align*}

\begin{lemma}\label{S} For $M>0$
\begin{multline*}
 \frac{\Gamma(M+1/2)}{\Gamma(M+1)} \frac{\Gamma(2M-n+2)}{\Gamma(2M+1)} +
 \frac{\Gamma(M-1/2)}{\Gamma(M)} \frac{\Gamma(2M-n)}{\Gamma(2M-1)}  -2 \frac{\Gamma(M+1/2)}{\Gamma(M+1)} \frac{\Gamma(2M-n+1)}{\Gamma(2M)}\\
 =2^{-n} \frac{\Gamma(M-\frac n2) \Gamma(M-\frac{n-1}2)}{(\Gamma(M+1))^2} (M+\frac{n(n-1)}2)
\end{multline*}
\end{lemma}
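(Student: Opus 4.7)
The plan is to verify the identity by applying Legendre's duplication formula $\Gamma(2z)=\frac{2^{2z-1}}{\sqrt{\pi}}\,\Gamma(z)\,\Gamma(z+1/2)$ twice to each of the three terms on the left-hand side, and reducing what remains to an elementary polynomial identity in $M$.

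The first application handles the denominators $\Gamma(2M+1)$, $\Gamma(2M-1)$, $\Gamma(2M)$: taking $z=M+1/2$, $z=M-1/2$, and $z=M$ respectively, each produces a half-integer factor that cancels the $\Gamma(M\pm 1/2)$ already sitting in the numerator. The three terms then take the shape $\mathrm{const}\cdot\Gamma(2M-n+b)/(\Gamma(M+1))^2$, with the appropriate variation for the middle term (whose denominator is $(\Gamma(M))^2$ instead). A second round of duplication applied to the remaining $\Gamma(2M-n+b)$ factors, with $z=M-n/2+1$, $z=M-n/2$, $z=M-(n-1)/2$, together with the shift identity $\Gamma(x+1)=x\Gamma(x)$, exhibits every term as a multiple of the common block
\[
\frac{2^{1-n}\,\Gamma(M-\tfrac n2)\,\Gamma(M-\tfrac{n-1}2)}{(\Gamma(M+1))^2},
\]
and converts $1/(\Gamma(M))^2$ in the middle term into $M^2/(\Gamma(M+1))^2$ via the same shift identity.

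After extracting this common factor, the claim reduces to the polynomial identity
\[
(M-\tfrac{n}{2})(M-\tfrac{n-1}{2})+M^2-2M(M-\tfrac{n}{2})=\tfrac12\bigl(M+\tfrac{n(n-1)}{2}\bigr),
\]
in which the $M^2$ contributions cancel and the remaining linear and constant pieces match by direct expansion; the leftover factor $\tfrac12$ then merges with $2^{1-n}$ to produce the $2^{-n}$ in the statement.

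The main obstacle is purely clerical: three separate invocations of the duplication formula and several $\Gamma(x+1)=x\Gamma(x)$ shifts must be tracked together with the corresponding powers of $2$ and $\sqrt{\pi}$ and the alternating signs. An arithmetic slip in any one of the three terms silently destroys the cancellation of the common factor, so the safest strategy is to simplify $T_1$, $T_2$, $T_3$ individually and completely before attempting to combine them and check the resulting quadratic in $M$.
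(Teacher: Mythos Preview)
Your proposal is correct. The route differs slightly from the paper's: the paper first pairs the three terms as $(T_1-T_3)+(T_2-T_3)$, uses the shift identity $\Gamma(x+1)=x\Gamma(x)$ to collapse everything to the single expression $\dfrac{\Gamma(M+1/2)}{\Gamma(M+1)}\dfrac{\Gamma(2M-n)}{\Gamma(2M+1)}\bigl(2M+n(n-1)\bigr)$, and only then applies the duplication formula twice (to $\Gamma(2M-n)$ and $\Gamma(2M+1)$). You instead apply duplication immediately to each of the three terms (six applications in all), extract the common block $2^{1-n}\Gamma(M-\tfrac n2)\Gamma(M-\tfrac{n-1}2)/(\Gamma(M+1))^2$, and reduce to the quadratic identity in $M$, which indeed simplifies to $\tfrac12(M+\tfrac{n(n-1)}2)$. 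The paper's grouping is a bit shorter, while your approach is more mechanical and symmetric; both are perfectly valid and the clerical risk you flag is the only real hazard in either.
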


This with $M=Lk$ yields 
\[
 K(L,1)=\frac{\sqrt{\pi}(n-2)!}{8} \sum_{k=1}^\infty \frac{\Gamma(Lk-n/2) \Gamma(Lk-(n-1)/2)  }{(\Gamma(Lk+1))^2} (LK+\frac{n(n-1)}2),
\]
which together with the identity of Lemma~\ref{heart} gives the stated result.

\begin{proof}[Proof of Lemma~\ref{S}] Denote by $S$ the left hand side of identity in the lemma. Then
\begin{align*}
 S&= \frac{\Gamma(M+1/2)}{\Gamma(M+1)} \frac{\Gamma(2M-n+2)}{\Gamma(2M+1)} - \frac{\Gamma(M+1/2)}{\Gamma(M+1)} \frac{\Gamma(2M-n+)}{\Gamma(2M)}+\\
&\qquad\qquad\qquad\qquad\qquad +\frac{\Gamma(M-1/2)}{\Gamma(M)} \frac{\Gamma(2M-n)}{\Gamma(2M-1)} - \frac{\Gamma(M+1/2)}{\Gamma(M+1)} \frac{\Gamma(2M-n+)}{\Gamma(2M)}\\
&=\frac{\Gamma(M+1/2)}{\Gamma(M+1)} \frac{\Gamma(2M-n+2)}{\Gamma(2M)} \left(\frac 1{2M}-\frac 1{2M-n+1}\right)+\\
&\qquad\qquad\qquad\qquad\qquad + \frac{\Gamma(M+1/2)}{\Gamma(M+1)} \frac{\Gamma(2M-n+1)}{\Gamma(2M)} \left(\frac {2M}{2M-n}-1\right)\\
&=\frac{\Gamma(M+1/2)}{\Gamma(M+1)} \frac{\Gamma(2M-n)}{\Gamma(2M+1)} \bigl(2M+n(n-1)\bigr)\ .
\end{align*}
Using the duplication formula \eqref{duplication} for $z=M-n$ and $z=M+1$ we get finally
\begin{align*}
 S&=\frac{\Gamma(M+1/2)}{\Gamma(M+1)} \frac{2^{2M-n-1} \Gamma(M-\frac n2) \Gamma(M-\frac{n-1}2)}{2^{2M} \Gamma(M+1/2)\Gamma(M+1)} \bigl(2M+n(n-1)\bigr)\\
 &=\frac 1{2^{n}} \frac{\Gamma(M-\frac n2) \Gamma(M-\frac{n-1}2)} {(\Gamma(M+1))^2}\bigl(M+\frac{n(n-1)}2\bigr)\ .
\end{align*}

\end{proof}

\begin{bibdiv}

\begin{biblist}

\bib{Bu13}{article}{
   author={Buckley, Jeremiah},
   title={Fluctuations in the zero set of the hyperbolic Gaussian analytic function},
   journal={Int. Math. Res. Not. IMRN to appear},
   volume={},
   date={2013},
   number={},
   pages={18},
   doi={},
}

\bib{BMP}{article}{
   author={Buckley, Jeremiah},
   author={Massaneda, Xavier},
   author={Pridhnani, Bharti},
   title={Gaussian analytic functions in the ball},
   journal={Preprint},
   date={2014},
   number={},
   pages={},
   issn={},
   review={},
}

\bib{HKPV}{book}{
   author={Hough, John Ben},
   author={Krishnapur, Manjunath},
   author={Peres, Yuval},
   author={Vir{\'a}g, B{\'a}lint},
   title={Zeros of Gaussian analytic functions and determinantal point
   processes},
   series={University Lecture Series},
   volume={51},
   publisher={American Mathematical Society},
   place={Providence, RI},
   date={2009},
   pages={x+154},
   isbn={978-0-8218-4373-4},
   review={\MR{2552864 (2011f:60090)}},
}

\bib{NS}{article}{
   author={Nazarov, Fedor},
   author={Sodin, Mikhail},
   title={Fluctuations in random complex zeroes: asymptotic normality
   revisited},
   journal={Int. Math. Res. Not. IMRN},
   date={2011},
   number={24},
   pages={5720--5759},
   issn={1073-7928},
   review={\MR{2863379 (2012k:60103)}},
}

\bib{Ru}{book}{
   author={Rudin, Walter},
   title={Function theory in the unit ball of $\mathbb C^n$},
   series={Classics in Mathematics},
   note={Reprint of the 1980 edition},
   publisher={Springer-Verlag},
   place={Berlin},
   date={2008},
   pages={xiv+436},
   isbn={978-3-540-68272-1},
   review={\MR{2446682 (2009g:32001)}},
}

\bib{SZ06}{article}{
   author={Shiffman, Bernard},
   author={Zelditch, Steve},
   title={Number variance of random zeros},
   journal={ArXiv:\newline  arxiv.org/pdf/math/0608743v3},
   volume={},
   date={2006},
   number={},
   pages={},
   issn={},
   review={},
   doi={},
}

\bib{SZ08}{article}{
   author={Shiffman, Bernard},
   author={Zelditch, Steve},
   title={Number variance of random zeros on complex manifolds},
   journal={Geom. Funct. Anal.},
   volume={18},
   date={2008},
   number={4},
   pages={1422--1475},
   issn={1016-443X},
   review={\MR{2465693 (2009k:32019)}},
   doi={10.1007/s00039-008-0686-3},
}

\bib{Sod}{article}{
   author={Sodin, Mikhail},
   title={Zeros of Gaussian analytic functions},
   journal={Math. Res. Lett.},
   volume={7},
   date={2000},
   number={4},
   pages={371--381},
   issn={1073-2780},
   review={\MR{1783614 (2002d:32030)}},
}

\bib{ST1}{article}{
   author={Sodin, Mikhail},
   author={Tsirelson, Boris},
   title={Random complex zeroes. I. Asymptotic normality},
   journal={Israel J. Math.},
   volume={144},
   date={2004},
   pages={125--149},
   issn={0021-2172},
   review={\MR{2121537 (2005k:60079)}},
   doi={10.1007/BF02984409},
}

\bib{Stoll}{book}{
   author={Stoll, Manfred},
   title={Invariant potential theory in the unit ball of ${\bf C}^n$},
   series={London Mathematical Society Lecture Note Series},
   volume={199},
   publisher={Cambridge University Press},
   place={Cambridge},
   date={1994},
   pages={x+173},
   isbn={0-521-46830-2},
   review={\MR{1297545 (96f:31011)}},
   doi={10.1017/CBO9780511526183},
}

\end{biblist}
\end{bibdiv}

\end{document}